\newtheorem{theorem}{Theorem}
\newtheorem{lemma}[theorem]{Lemma}
\newtheorem{definition}[theorem]{Definition}
\crefname{theorem}{theorem}{Theorems}
\crefname{lem}{Lemma}{Lemmas}
\crefname{cor}{Corollary}{Corollaries}
\crefname{prop}{Proposition}{Propositions}
\crefname{proposition}{Proposition}{Propositions}
\crefname{defn}{Definition}{Definitions}
\crefname{exm}{Example}{Examples}
\crefname{rem}{Remark}{Remarks}
\crefname{section}{Section}{Sections}
\crefname{equation}{\unskip}{\unskip}
\crefname{enumi}{\unskip}{\unskip}
\newcommand{\gen}[1]{\langle #1\rangle}
\newcommand{\red}[1]{\textcolor{black}{#1}}
\begin{document}

\noindent{\Large 
Transposed Poisson structures on Virasoro-type algebras}
 \footnote{
The  first part of the work is supported by 
FCT   UIDB/00212/2020, UIDP/00212/2020, and 2022.02474.PTDC.
The second part of this work is supported by the Russian Science Foundation under grant 22-71-10001.
} 

	\bigskip
	
	 \bigskip

\begin{center}	
	{\bf
		Ivan Kaygorodov\footnote{CMA-UBI, Universidade da Beira Interior, Covilh\~{a}, Portugal; 
   \ Saint Petersburg  University, Russia;\  kaygorodov.ivan@gmail.com},
   Abror Khudoyberdiyev\footnote{V.I.Romanovskiy Institute of Mathematics Academy of Science of Uzbekistan; National University of Uzbekistan; \ 
khabror@mail.ru} \&
Zarina Shermatova\footnote{V.I.Romanovskiy Institute of Mathematics Academy of Science of Uzbekistan; Kimyo International University in Tashkent, Uzbekistan; \ 
z.shermatova@mathinst.uz}}   
\end{center}

 \bigskip
  
\noindent {\bf Abstract.}
{\it 
We compute  $\frac{1}{2}$-derivations on the deformed generalized Heisenberg-Virasoro\footnote{We refer the notion of \textquotedblleft Witt\textquotedblright  \   algebra to the simple Witt algebra and the notion of \textquotedblleft Virasoro\textquotedblright  \  algebra to the central extension of the simple Witt algebra.} algebras and  
on not-finitely graded Heisenberg-Virasoro algebras   
$\widehat{W}_n(G)$, $\widetilde{W}_n(G)$,   and $\widetilde{HW}_n(G)$.
We classify all transposed Poisson structures on
such algebras. }

\bigskip
\noindent {\bf Keywords}: 
{\it    
Lie algebra; transposed Poisson algebra;  $\frac 1 2$-derivation.
}

 \bigskip
\noindent {\bf MSC2020}: 17A30 (primary); 17B40, 17B63 (secondary).

 \bigskip
\section*{Introduction} 

Since their origin in the 1970s in Poisson geometry, Poisson algebras have appeared in several areas of mathematics and physics, such as algebraic geometry, operads, quantization theory, quantum groups, and classical and quantum mechanics. One of the natural tasks in the theory of Poisson algebras is the description of all such algebras with a fixed Lie or associative part.
	Recently, Bai, Bai, Guo,  and Wu have introduced a dual notion of the Poisson algebra~\cite{bai20}, called a transposed Poisson algebra, by exchanging the roles of the two multiplications in the Leibniz rule defining a Poisson algebra. A transposed Poisson algebra defined this way not only shares some properties of a Poisson algebra, such as the closedness under tensor products and the Koszul self-duality as an operad but also admits a rich class of identities \cite{kms,bai20,fer23,lb23}. It is important to note that a transposed Poisson algebra naturally arises from a Novikov-Poisson algebra by taking the commutator Lie algebra of its Novikov part \cite{bai20}.
 	Any unital transposed Poisson algebra is
	a particular case of a ``contact bracket'' algebra 
	and a quasi-Poisson algebra.
 Each transposed Poisson algebra is a 
 commutative  Gelfand-Dorfman algebra \cite{kms}
 and it is also an algebra of Jordan brackets \cite{fer23}.
	In a recent paper by Ferreira, Kaygorodov, and Lopatkin
	a relation between $\frac{1}{2}$-derivations of Lie algebras and 
	transposed Poisson algebras has been established \cite{FKL}. 	These ideas were used to describe all transposed Poisson structures 
	on  Witt and Virasoro algebras in  \cite{FKL};
	on   twisted Heisenberg-Virasoro,   Schr\"odinger-Virasoro  and  
	extended Schr\"odinger-Virasoro algebras in \cite{yh21};
	on Schr\"odinger algebra in $(n+1)$-dimensional space-time in \cite{ytk};
\red{on solvable Lie algebra with filiform nilradical} in \cite{aae23};
on oscillator Lie algebras in \cite{kkh24};
on Galilean Lie algebras in \cite{klv22};
	on Witt type Lie algebras in \cite{kk23};
	on generalized Witt algebras in \cite{kkg23}; 
 Block Lie algebras in \cite{kkg23}
  and
on    Lie incidence algebras (for all references, see the survey \cite{k23}).
  
 \medskip

The description of transposed Poisson structures obtained on the Witt algebra \cite{FKL} opens the question of finding algebras related to the Witt algebra
which admit nontrivial transposed Poisson structures. 
So, some algebras related to Witt algebra are studied in \cite{kk23,kkg23}.
The present paper is a continuation of this research. 
Specifically, we describe transposed Poisson structures on 
the deformed generalized Heisenberg-Virasoro algebra $g(G, \lambda)$
and not-finitely graded Lie algebras  $\widehat{W}(G),$  $\widetilde{W}(G),$ and $\widetilde{HW}(G).$
The notion of deformed generalized Heisenberg-Virasoro (${\rm dgHV}$) algebra $g(G, \lambda)$ 
has been appeared in \cite{x0}.
It appears as the central extension of the deformed higher-rank Heisenberg-Virasoro algebra, 
which is a generalization of algebras ${\mathcal W}(0,b)$, defined in \cite{kac}. 
Verma modules over ${\rm dgHV}$ algebras were studied in  \cite{x21}.
All transposed Poisson structures on  ${\mathcal W}(0,b)$ were described in \cite{FKL}.
The algebra $g(\mathbb Z, 0)$ is named twisted Heisenberg-Virasoro algebra and it was well-studied \cite{ackp}.
So, 
there are works about 
derivations and automorphisms \cite{sj06}, 
representations \cite{b03,LZ20}, 
modules \cite{LZ,chsy},
Lie bialgebra structures \cite{lp12}, 
left-symmetric algebra structures \cite{cl14},
transposed Poisson structures  \cite{yh21},
and so on.
The algebra $\widehat{W}(G)$ was introduced in \cite{chs15} as the central extension of the algebra  ${W}(G).$
Lie bialgebra structures on $\widehat{W}(G)$ were studied in \cite{chs14}.
All transposed Poisson structures on $W(G)$ were described in \cite{kkz}.
Algebras $\widetilde{W}(G)$ and $\widetilde{HW}(G),$ as the central extensions of suitable Lie algebras, were introduced in \cite{su} and \cite{fzy15}, respectably.
All transposed Poisson structures on $HW(G)$ were described in \cite{kkz}.

 \medskip

Summarizing, we prove that algebras $g(G, \lambda)$  admit nontrivial transposed Poisson structures only for $\lambda=-1$;
algebras $\widehat{W}(G)$ admit nontrivial transposed Poisson structures; 
algebras $\widetilde{W}(G)$ and $\widetilde{HW}(G)$ do not admit nontrivial transposed Poisson structures.

\section{Preliminaries}

In this section, we recall some definitions and known results for studying transposed Poisson structures. Although all algebras and vector spaces are considered over the complex field, many results can be proven over other fields without modifications of proofs. The notation $\gen{S}$ means the $\mathbb C$-subspace generated by $S$.

\begin{definition} 
Let $\mathfrak {L}$ be a vector space equipped with two nonzero bilinear operations $\cdot$ and $[\cdot, \cdot]$. The triple $(\mathfrak {L}, \cdot, [\cdot, \cdot])$ is called a transposed Poisson algebra if $(\mathfrak {L}, \cdot)$ is a commutative associative  algebra and $(\mathfrak {L}, [\cdot, \cdot])$ is a Lie algebra that satisfies the following compatibility condition
$$
\begin{array}{c}
2z \cdot [x,y]=[z \cdot x, y]+[x, z \cdot y].
\end{array}
$$
\end{definition}

\begin{definition} 
Let $(\mathfrak{L}, [\cdot, \cdot])$ be a Lie algebra. A transposed Poisson structure on $(\mathfrak {L}, [\cdot, \cdot])$ is a commutative associative  multiplication $\cdot$ in $\mathfrak {L}$ which makes $(\mathfrak {L}, \cdot, [\cdot, \cdot])$ a transposed Poisson algebra.
\end{definition}

\begin{definition} 
Let $(\mathfrak {L}, [\cdot, \cdot])$ be a Lie algebra, $\varphi: \mathfrak {L}\rightarrow \mathfrak {L} $ be a linear map. Then $\varphi$ is a $\frac 12$-derivation if it satisfies
$$
\begin{array}{c}
\varphi ([x,y])= \frac 12 \big([\varphi (x), y]+[x, \varphi (y)]\big).
\end{array}
$$
\end{definition}

Observe that $\frac 12$-derivations are a particular case of $\delta$-derivations introduced by Filippov in 1998 (for references about the  study of $\frac{1}{2}$-derivations, see \cite{k23}) and 
recently the notion of $\frac{1}{2}$-derivations of algebras was generalized to 
 $\frac{1}{2}$-derivations from an algebra to a module \cite{zz}.  
The main example of $\frac 12$-derivations is the multiplication by an element from the ground field. Let us call such $\frac 12$-derivations as  {trivial
$\frac 12$-derivations.} It is easy to see that $[\mathfrak {L},\mathfrak {L}]$ and $\operatorname{Ann}(\mathfrak {L})$ are invariant under any $\frac 12$-derivation of $\mathfrak {L}$.

Let $G$ be an abelian group, $\mathfrak{L}=\bigoplus \limits_{g\in G}\mathfrak{L}_{g}$ be a $G$-graded Lie algebra.
We say that a $\frac 12$-derivation $\varphi$ has degree $g$ (and denoted by $\deg(\varphi)$) if $\varphi(\mathfrak{L}_{h})\subseteq \mathfrak{L}_{g+h}$.
Let $\Delta(\mathfrak{L})$ denote the space of $\frac 12$-derivations and write $\Delta_{g}(\mathfrak{L})=\{\varphi \in \Delta(\mathfrak{L}) \mid \deg(\varphi)=g\}$.
The following trivial lemmas are useful in our work.

\begin{lemma}\label{l01}
Let $\mathfrak{L}=\bigoplus \limits_{g\in G}\mathfrak{L}_{g}$ be a $G$-graded Lie algebra. Then
$
\Delta(\mathfrak{L})=\bigoplus \limits_{g\in G}\Delta_{g}(\mathfrak{L}).
$
\end{lemma}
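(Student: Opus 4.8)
The plan is to prove the decomposition $\Delta(\mathfrak{L})=\bigoplus_{g\in G}\Delta_g(\mathfrak{L})$ by the standard ``averaging over the grading group'' argument, adapted to the $\frac12$-derivation identity. First I would fix $\varphi\in\Delta(\mathfrak{L})$ and, for each $g\in G$, define $\varphi_g\colon\mathfrak{L}\to\mathfrak{L}$ to be the composition of $\varphi$ with the projections: for a homogeneous element $x\in\mathfrak{L}_h$, set $\varphi_g(x)$ to be the $\mathfrak{L}_{g+h}$-component of $\varphi(x)$, and extend linearly. By construction $\varphi_g(\mathfrak{L}_h)\subseteq\mathfrak{L}_{g+h}$, so if $\varphi_g$ is a $\frac12$-derivation then $\varphi_g\in\Delta_g(\mathfrak{L})$; also, for each fixed homogeneous $x$ only finitely many $\varphi_g(x)$ are nonzero and $\sum_{g\in G}\varphi_g(x)=\varphi(x)$, which gives $\varphi=\sum_g\varphi_g$ and the directness of the sum (a nonzero element of $\Delta_g$ strictly raises degree by $g$, so distinct homogeneous components cannot cancel).

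The substantive step is checking that each $\varphi_g$ satisfies the $\frac12$-derivation identity. I would take homogeneous $x\in\mathfrak{L}_h$ and $y\in\mathfrak{L}_k$, so that $[x,y]\in\mathfrak{L}_{h+k}$, and apply $\varphi$ to get
\[
\varphi([x,y])=\tfrac12\bigl([\varphi(x),y]+[x,\varphi(y)]\bigr).
\]
Now decompose $\varphi(x)=\sum_{g}\varphi_g(x)$ with $\varphi_g(x)\in\mathfrak{L}_{g+h}$, and similarly for $\varphi(y)$ and $\varphi([x,y])$. Since $[\,\cdot\,,\cdot]$ is $G$-graded, $[\varphi_g(x),y]\in\mathfrak{L}_{g+h+k}$ and $[x,\varphi_g(y)]\in\mathfrak{L}_{g+h+k}$, so both sides of the displayed identity are direct sums of homogeneous pieces indexed by $g$. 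Comparing the $\mathfrak{L}_{g+h+k}$-components on both sides yields
\[
\varphi_g([x,y])=\tfrac12\bigl([\varphi_g(x),y]+[x,\varphi_g(y)]\bigr)
\]
for every $g$. Since $x,y$ were arbitrary homogeneous elements and both sides are bilinear, this extends to all of $\mathfrak{L}$, so $\varphi_g\in\Delta(\mathfrak{L})$, hence $\varphi_g\in\Delta_g(\mathfrak{L})$.

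The only point requiring a word of care — the ``main obstacle'', though it is minor — is the well-definedness of the componentwise projection when $G$ is infinite: one must note that $\varphi(x)$ lies in $\mathfrak{L}=\bigoplus_{g}\mathfrak{L}_g$, an algebraic (not topological) direct sum, so it has only finitely many nonzero homogeneous components, making $\varphi_g(x)$ well defined and $\sum_g\varphi_g(x)=\varphi(x)$ a finite sum for each homogeneous $x$. The map $\varphi_g$ is then defined on all of $\mathfrak{L}$ by linear extension from homogeneous elements. With this observed, the grading-comparison argument above goes through verbatim, and the lemma follows.
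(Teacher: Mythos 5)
The paper offers no proof of this lemma at all (it is introduced as one of the ``trivial lemmas''), so there is nothing to compare your argument against line by line. Your argument is the standard one, and its two substantive steps are correct: each component $\varphi_g$ is well defined because $\varphi(x)$ has only finitely many homogeneous pieces for each homogeneous $x$, and projecting the $\frac12$-derivation identity onto the $\mathfrak{L}_{g+h+k}$-component shows that each $\varphi_g$ is again a $\frac12$-derivation, hence lies in $\Delta_g(\mathfrak{L})$.

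There is, however, one genuine gap between what you prove and what the symbol $\bigoplus$ asserts. You establish that $\varphi(x)=\sum_{g}\varphi_g(x)$ is a finite sum \emph{for each fixed homogeneous $x$}, but membership in $\bigoplus_{g\in G}\Delta_g(\mathfrak{L})$ requires that only finitely many of the maps $\varphi_g$ be nonzero \emph{globally}, and pointwise finiteness does not imply this: the degrees that occur may vary with $x$. For a completely general $G$-graded Lie algebra the statement with $\bigoplus$ is in fact false --- take $\mathfrak{L}$ infinite-dimensional abelian with one-dimensional homogeneous components; every linear map is then a $\frac12$-derivation, but not every linear map is a finite sum of homogeneous ones. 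What your argument actually proves is the inclusion $\Delta(\mathfrak{L})\subseteq\prod_{g\in G}\Delta_g(\mathfrak{L})$ with pointwise-finite decompositions, which is all the paper ever uses: it computes each $\Delta_g$ separately, and in the one case where infinitely many $\Delta_g$ are nonzero ($\widehat{W}(G)$), the finiteness of the support of $\varphi$ is recovered from the specific structure, namely that each $\varphi_d$ is determined by its value on the single element $L_{0,0}$. To make your proof match the literal statement you would need either to weaken $\bigoplus$ to this pointwise-finite sense, or to add a hypothesis under which global finiteness holds --- for instance that $\mathfrak{L}$ is perfect and generated by finitely many homogeneous elements, so that each $\varphi_g$ is determined by its values on those generators and all but finitely many $\varphi_g$ vanish.
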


\begin{lemma}\label{l1} (see {\rm\cite{FKL}})
Let $(\mathfrak {L}, \cdot, [\cdot, \cdot])$ be a transposed Poisson algebra and $z$ an arbitrary element from $\mathfrak {L}$.
Then the left multiplication $L_{z}$ in the   commutative associative  algebra $(\mathfrak {L}, \cdot)$ gives a $\frac 12$-derivation on the Lie algebra $(\mathfrak {L}, [\cdot, \cdot])$.
\end{lemma}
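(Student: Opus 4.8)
The plan is to unwind the definitions and observe that the claimed identity is literally a rewriting of the transposed Poisson compatibility condition. Recall that the left multiplication operator is $L_z(x) = z\cdot x$ for all $x\in\mathfrak{L}$, and that a linear map $\varphi$ is a $\frac12$-derivation of $(\mathfrak{L},[\cdot,\cdot])$ precisely when $\varphi([x,y]) = \frac12\big([\varphi(x),y]+[x,\varphi(y)]\big)$ for all $x,y$. So the only thing to verify is that $L_z$ is linear and that it satisfies this last identity.

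Linearity of $L_z$ is immediate from the bilinearity of the commutative associative product $\cdot$. For the $\frac12$-derivation identity, I would fix arbitrary $x,y\in\mathfrak{L}$ and compute directly:
\[
L_z([x,y]) = z\cdot[x,y] = \tfrac12\big([z\cdot x,y]+[x,z\cdot y]\big) = \tfrac12\big([L_z(x),y]+[x,L_z(y)]\big),
\]
where the middle equality is exactly the compatibility condition $2z\cdot[x,y]=[z\cdot x,y]+[x,z\cdot y]$ from the definition of a transposed Poisson algebra, divided by $2$. Since $x,y,z$ were arbitrary, $L_z\in\Delta(\mathfrak{L})$ for every $z\in\mathfrak{L}$.

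There is essentially no obstacle here: the statement is an immediate reformulation of the defining axiom, and no structural input beyond the compatibility condition and bilinearity is needed. The only point worth a word of care is that the conclusion uses the Lie structure only through the bracket $[\cdot,\cdot]$ and the product only through $L_z$, so the associativity and commutativity of $\cdot$ and the Jacobi identity for $[\cdot,\cdot]$ play no role in this particular lemma — they are subsumed once one merely asserts that $(\mathfrak{L},\cdot,[\cdot,\cdot])$ is a transposed Poisson algebra. This lemma is precisely what lets us later search for transposed Poisson structures by first classifying $\frac12$-derivations of the Lie algebras in question.
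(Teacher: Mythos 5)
Your proof is correct and is exactly the standard argument: the paper itself gives no proof (it cites \cite{FKL}), and the proof there is the same immediate unwinding of the compatibility condition $2z\cdot[x,y]=[z\cdot x,y]+[x,z\cdot y]$ that you carry out.
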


\begin{lemma}\label{lempr}  (see {\rm\cite{FKL}})
Let $\mathfrak {L}$ be a Lie algebra without non-trivial $\frac 12$-derivations. Then every transposed Poisson structure defined on $\mathfrak {L}$ is trivial.
\end{lemma}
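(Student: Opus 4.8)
The plan is to reduce the statement entirely to the structure of $\frac12$-derivations supplied by Lemma~\ref{l1}. Let $\cdot$ be an arbitrary transposed Poisson structure on $\mathfrak{L}$. First I would invoke Lemma~\ref{l1}: for every $z\in\mathfrak{L}$ the left multiplication operator $L_{z}\colon x\mapsto z\cdot x$ is a $\frac12$-derivation of the Lie algebra $(\mathfrak{L},[\cdot,\cdot])$. Since by hypothesis $\mathfrak{L}$ admits no non-trivial $\frac12$-derivations, each $L_{z}$ must be a trivial one, i.e.\ $L_{z}=\lambda_{z}\,\mathrm{id}$ for some scalar $\lambda_{z}\in\mathbb{C}$; equivalently $z\cdot x=\lambda_{z}x$ for all $x\in\mathfrak{L}$.

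Next I would exploit commutativity of $\cdot$. From $z\cdot x=x\cdot z$ we obtain $\lambda_{z}x=\lambda_{x}z$ for all $x,z\in\mathfrak{L}$. Choosing a pair $x,z$ that is linearly independent over $\mathbb{C}$ (possible since $\mathfrak{L}$ is not one-dimensional), the equality $\lambda_{z}x=\lambda_{x}z$ forces $\lambda_{z}=\lambda_{x}=0$; letting this pair range over all linearly independent pairs yields $\lambda_{z}=0$ for every $z\in\mathfrak{L}$, hence $z\cdot x=0$ identically. Therefore the only transposed Poisson structure supported by $\mathfrak{L}$ is the trivial (zero) one, which is exactly the assertion of the lemma.

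I do not expect a genuine obstacle here: the entire content is carried by Lemma~\ref{l1}, and what remains is a one-line linear-algebra argument that does not even use associativity of $\cdot$. The only points meriting a word of care are the meaning of a \emph{trivial} transposed Poisson structure (the zero multiplication) and the tacit hypothesis $\dim_{\mathbb{C}}\mathfrak{L}\geq 2$ — automatic for every Virasoro-type algebra treated in this paper (indeed, implied by the bracket being non-zero) — without which a one-dimensional abelian Lie algebra would violate the statement.
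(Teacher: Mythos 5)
Your proof is correct and is essentially the standard argument from \cite{FKL}, which the paper cites for Lemma~\ref{lempr} rather than reproving: Lemma~\ref{l1} forces every left multiplication $L_z$ to be a scalar multiple of the identity, and commutativity of $\cdot$ then annihilates all the scalars once $\dim_{\mathbb{C}}\mathfrak{L}\geq 2$. Your closing caveat about the one-dimensional case is apt but harmless for all the algebras considered here.
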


\section{Transposed Poisson structures on deformed generalized Heisenberg-Virasoro algebras}

Let $\lambda \in \mathbb C$ and  $G$ be an additive subgroup of $\mathbb{C}$ such that $G$ is free of rank $\nu \geq 1$ if $\lambda= -2.$ 
The deformed generalized Heisenberg-Virasoro algebra $g(G, \lambda)$  was obtained by the universal central extension of the algebra $\check{g}(G,\lambda)$ with multiplications
$$[L_{a}, L_{b}] = (b - a)L_{a+b}, \quad [L_{a}, I_{b}] = (b -\lambda a)I_{a+b}.$$
Recall from
\cite{x0} the deformed generalized Heisenberg-Virasoro algebra $g(G, \lambda)$\footnote{
Let us mention, that  the algebra  $g(G, \lambda)$ from \cite{x0} was considered under the following restrictions 
 $\mathbb Z \subseteq G$ and $\frac{1}{k} \notin G$ for any integer $k > 1$.
The algebra $\check{g}(G,-1)$ has no universal central extension, hence  {\rm dgHV}
 algebra $g(G, \lambda)$ usually considered with $\lambda\neq -1.$ 
 In the present  work, we consider the algebra $g(G, \lambda)$ without restrictions on $G$ and $\lambda$.} with one deformation parameter $\lambda$
is a Lie algebra with a basis 
$\{L_{a}, I_{a}, C_{L},  \ | \ a \in G\}_{\lambda \neq 0,1,-2}$ 
$\big($resp., $\{L_{a}, I_{a}, C_{L}, C_{I}, C_{LI}^{(0)} \ | \ a \in G\}_{\lambda = 0},$  \ 
$\{L_{a}, I_{a}, C_{L},  C_{LI}^{(1)} \ | \ a \in G \}_{\lambda = 1},$  \ 
$\{L_{a}, I_{a}, C_{L},  C_{LI}^{(i)} \ | \ a \in G, 2 \leq i \leq \nu\}_{\lambda = -2}\big)$ 
and the multiplication table is given by
\begin{longtable}{rcl}
$[L_{a},L_{b}]$&$=$&$(b-a)L_{a+b}+\frac{1}{12}(a^{3}-a)\delta_{a,-b}C_{L}$ \\ 
$[L_{a},I_{b}]$&$=$&$(b-\lambda a)I_{a+b}+\delta_{a,-b}\left ((a^{2}+a)\delta_{\lambda,0}C_{LI}^{(0)}+\frac{1}{12}(a^{3}-a)\delta_{\lambda,1}C_{LI}^{(1)}+\sum\limits_{i=2}^{\nu}a_{(i)}\delta_{\lambda,-2}C_{LI}^{(i)}\right)$\\
$[I_{a},I_{b}]$&$=$&$aC_{I}\delta_{a,-b}\delta_{\lambda,0},$   
\end{longtable}
\noindent where $a=\sum\limits_{i=1}^{\nu}a_{(i)}\epsilon_{1}$ and  $a_{(i)} \ (1 \leq i \leq \nu)$ are coefficients of a with respect to
a fixed $\mathbb{Z}$-basis of $\epsilon_{1},\ldots, \epsilon_{\nu}$ of $G$ for the case $\lambda = -2$.

In \cite{yh21} the authors showed that every $\frac{1}{2}$-derivation of $g(\mathbb{Z},0)$ is trivial. 
It should be noted, that the generalization of the present result to the case of 
 $g(G,0)$ is similar of the proof given in \cite{yh21} and we will omitted it.
So we will consider the case of $\lambda\neq 0,$ then our multiplication table   becomes the following relations

\begin{eqnarray}
\label{ph1}
[L_{a},L_{b}]&=&(b-a)L_{a+b}+\frac{1}{12}(a^{3}-a)\delta_{a,-b}C_{L}, \\[1mm]
\label{ph2}
[L_{a},I_{b}]&=&(b-\lambda a)I_{a+b}+\delta_{a,-b}\left (\frac{1}{12}(a^{3}-a)\delta_{\lambda,1}C_{LI}^{(1)}+\sum_{i=2}^{\nu}a_{(i)} \delta_{\lambda,-2}C_{LI}^{(i)}\right).
\end{eqnarray}

Set
    $g(G,\lambda)_0=\langle L_0, I_0, C_L, C_{LI}^{(i)} \ | \ 1\leq i\leq \nu \rangle$ and $g(G,\lambda)_{j\neq 0}=\langle L_j,I_j\rangle.$
Then $g(G,\lambda)=\bigoplus\limits_{j\in G}g(G,\lambda)_j$ and 
$\Delta(g(G,\lambda))=\bigoplus\limits_{j\in G}\Delta_{j}(g(G,\lambda)).$
To determine $\Delta(g(G,\lambda)),$ we need to calculate 
every homogeneous subspace $\Delta_{j}(g(G,\lambda)).$
Let $\varphi_{j} \in \Delta_{j}(g(G,\lambda)).$ 
Since $\operatorname{Ann}({\mathfrak{L}})$ is invariant under any $\frac{1}{2}$-derivation we can assume
\begin{longtable}{lll}
 $\varphi_{j}(L_{k})$ & = &$\alpha_{j,k}L_{k+j}+\alpha'_{j,k}I_{k+j}+\delta_{k+j,0}\left(\gamma_{j}\delta_{\lambda,1}C_{L}+\delta_{\lambda,-2}\sum\limits_{i=1}^{\nu}\gamma^{(i)}_{j}C_{LI}^{(i)}\right) $\\
$\varphi_{j}(I_{k})$ & = &$\beta_{j,k}L_{k+j}+\beta'_{j,k}I_{k+j}+\delta_{k+j,0}\left(\mu_{j}\delta_{\lambda,1}C_{L}+\delta_{\lambda,-2}\sum\limits_{i=1}^{\nu}\mu^{(i)}_{j}C_{LI}^{(i)}\right)$\\
$\varphi_{j}(C_{L})$ & = &$\delta_{j,0}\left(d_0C_{L}+\delta_{\lambda,1}d_0^{(1)}C_{LI}^{(1)}+\delta_{\lambda,-2}\sum\limits_{i=2}^{\nu}d_0^{(i)}C_{LI}^{(i)}\right)$\\
$\varphi_{j}(C^{(k)}_{LI})$ & = &$\delta_{j,0}\left(d_{k}C_{L}+\delta_{\lambda,1}d^{(1)}_{k}C_{LI}^{(i)}+\delta_{\lambda,-2}\sum\limits_{i=2}^{\nu}d^{(i)}_{k}C_{LI}^{(i)}\right)$
\end{longtable}

Applying $\varphi_{j}$ to both sides of (\ref{ph1}), we have 

{\footnotesize{\begin{eqnarray}&&2(b-a)\alpha_{j,a+b}= \alpha_{j,a}(b-a-j)+ \alpha_{j,b}(b-a+j),\label{32}\end{eqnarray}
\begin{eqnarray}&&2(b-a)\alpha'_{j,a+b}=-\alpha'_{j,a}(a-\lambda b+j)+\alpha'_{j,b}(b-\lambda a+j),\label{33}\end{eqnarray}
\begin{eqnarray}&&\delta_{a+b,-j}(b-a)\gamma_{j}+\frac{1}{12}\delta_{j,0}\delta_{a,-b}(a^{3}-a)d_0=\frac{1}{24}\delta_{a+b,-j}\left(((a+j)^{3}-(a+j))\alpha_{j,a}+(a^3-a)\alpha_{j,b}\right),\label{34}\end{eqnarray}
\begin{eqnarray}&&\delta_{a+b,-j}(b-a)\gamma^{(1)}_{j}+\frac{1}{12}\delta_{j,0}\delta_{a,-b}(a^{3}-a)d_0^{(1)}=\frac{1}{24}\delta_{a+b,-j}((a^3-a)\alpha'_{j,b}-(b^3-b)\alpha'_{j,a}), \mbox{ if } \lambda=1,\label{c.1}\end{eqnarray}
\begin{eqnarray}&&\delta_{a+b,-j}(b-a)\gamma^{(i)}_{j}+\frac{1}{12}\delta_{j,0}\delta_{a,-b}(a^{3}-a)d_0^{(i)}=\frac{1}{2}\delta_{a+b,-j}(a_{(i)}\alpha'_{j,b}-b_{(i)}\alpha'_{j,a}), \ \ 2\leq i\leq\nu,  \mbox{ if } \lambda=-2.\label{35}\end{eqnarray}}}

Applying $\varphi_{j}$ to both sides of  (\ref{ph2}), we have
{\footnotesize{\begin{eqnarray}
&&2(b-\lambda a)\beta_{j,a+b}= \beta_{j,b}(b-a+j),\label{36}
\end{eqnarray}
\begin{eqnarray}&&2(b-\lambda a)\beta'_{j,a+b}= \alpha_{j,a}(b-\lambda a-\lambda j)+ \beta'_{j,b}(b-\lambda a+j),\label{37}
\end{eqnarray}
\begin{eqnarray}
&&\delta_{a+b,-j}(b-\lambda a)\mu_{j}+\delta_{j,0}\delta_{a,-b}\left(\frac{1}{12}\delta_{\lambda,1}(a^3-a)d_{1}+\delta_{\lambda,-2}\sum_{i=2}^{\nu}a_{(i)}d_{i}\right)=\frac{1}{24}\delta_{a+b,-j}(a^{3}-a)\beta_{j,b},\label{38}
\end{eqnarray}
\begin{eqnarray}
&&\delta_{a+b,-j}(b-\lambda a)\mu^{(1)}_{j}+\delta_{j,0}\delta_{a,-b}\frac{1}{12}(a^3-a)d_{1}^{(1)}=\frac{1}{24}\delta_{a+b,-j}\left(((a+j)^3-(a+j))\alpha_{j,a}+(a^3-a)\beta'_{j,b}\right),  \mbox{ if } \lambda=1, \label{c.2}
\end{eqnarray}
\begin{eqnarray}
&&\delta_{a+b,-j}(b-\lambda a)\mu^{(i)}_{j}+\delta_{j,0}\delta_{a,-b}\sum_{t=2}^{\nu}a_{(t)}d_{t}^{(i)}=
\frac{1}{2}\delta_{a+b,-j}\left((a+j)_{(i)}\alpha_{j,a}+a_{(i)}\beta'_{j,b}\right),2\leq i\leq\nu,  \mbox{ if } \lambda=-2.\label{39}
\end{eqnarray}}}

\begin{lemma}\label{MainLemma} 
$ \Delta_{j\neq0}(g(G,\lambda))=0.$ 
\end{lemma}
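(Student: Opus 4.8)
The plan is to exploit the system of functional equations \eqref{32}--\eqref{39} with $j\neq 0$, treating the coefficient families $\{\alpha_{j,k}\},\{\alpha'_{j,k}\},\{\beta_{j,k}\},\{\beta'_{j,k}\}$ as unknowns parametrised by $k\in G$. Since $j\neq 0$ we have $a+b=-j\neq 0$ for generic $a,b$, so the Kronecker deltas $\delta_{a+b,-j}$ are the only surviving central contributions, while $\delta_{j,0}$ terms all vanish. This decouples the problem: the "core" relations \eqref{32}, \eqref{33}, \eqref{36}, \eqref{37} involve no central terms at all and must be solved first; once $\alpha,\alpha',\beta,\beta'$ are pinned down, the remaining relations \eqref{34}, \eqref{c.1}, \eqref{35}, \eqref{38}, \eqref{c.2}, \eqref{39} force the corresponding $\gamma,\mu$-type coefficients to vanish.

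The first step is to handle \eqref{36}, which reads $2(b-\lambda a)\beta_{j,a+b}=\beta_{j,b}(b-a+j)$. Specialising $a$ and $b$ in two different ways (e.g. swapping the roles, or choosing $b=0$ versus a generic split of the index $a+b$) yields an overdetermined linear recursion; comparing the two expressions for $\beta_{j,a+b}$ forces $\beta_{j,k}=0$ for all $k$, except possibly at finitely many exceptional indices where the coefficients $b-\lambda a$ or $b-a+j$ vanish — those must be checked separately and dispatched using a neighbouring instance of the same identity. With $\beta\equiv 0$, relation \eqref{37} becomes $2(b-\lambda a)\beta'_{j,a+b}=\alpha_{j,a}(b-\lambda a-\lambda j)+\beta'_{j,b}(b-\lambda a+j)$, which couples $\beta'$ to $\alpha$. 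In parallel, \eqref{32} is exactly the relation already analysed for the Witt algebra in \cite{FKL}, so $\alpha_{j,k}$ is forced to be trivial, i.e. of the form $c_j(k+j)^{?}$ — more precisely \eqref{32} with $j\neq0$ admits only the zero solution on a $G$-graded Witt algebra by the argument in \cite{FKL,kkg23}; likewise \eqref{33} is a twisted (by $\lambda$) version of the same recursion and should again force $\alpha'_{j,k}=0$. Feeding $\alpha\equiv 0$ back into \eqref{37} then kills $\beta'$ as well.

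Once all four families vanish, the central relations collapse: each of \eqref{34}, \eqref{c.1}, \eqref{35}, \eqref{38}, \eqref{c.2}, \eqref{39} has its entire right-hand side equal to zero and its left-hand side (with $j\neq 0$) equal to $\delta_{a+b,-j}(b-a)\gamma_j$ or $\delta_{a+b,-j}(b-\lambda a)\mu_j$ type expressions; choosing $a,b$ with $a+b=-j$ and $b-a\neq 0$ (respectively $b-\lambda a\neq 0$), which is possible since $G$ has more than one element and $\lambda\neq 0$, forces $\gamma_j=\gamma_j^{(i)}=\mu_j=\mu_j^{(i)}=0$. Hence $\varphi_j=0$, proving $\Delta_{j\neq 0}(g(G,\lambda))=0$.

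The main obstacle is the simultaneous, coupled treatment of \eqref{32}--\eqref{33} and \eqref{37}: one must argue carefully that the Witt-type recursion \eqref{32} has no nonzero degree-$j$ solution for $j\neq 0$ even in the non-finitely-graded setting (where $G$ may not be finitely generated and zero divisors among the structure coefficients $b-a\pm j$ occur on a small but nonempty set), and then propagate this through the $\lambda$-twisted recursion \eqref{33} whose zero-set $\{b-\lambda a+j=0\}$ shifts with $\lambda$. I expect the bookkeeping of these exceptional index values — showing that the finitely many indices where a coefficient degenerates can always be reached from a non-degenerate instance of the same functional equation — to be the delicate part; everything after the four coefficient families are shown to vanish is a routine specialisation.
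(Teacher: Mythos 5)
Your overall architecture is right (first pin down the four coefficient families, then let the central relations collapse), and your treatment of \eqref{36} and of the final step killing $\gamma_j,\gamma_j^{(i)},\mu_j,\mu_j^{(i)}$ matches the paper. But there is a genuine gap at the heart of the argument: you assert that \eqref{32} ``with $j\neq0$ admits only the zero solution'' by the Witt-algebra analysis of \cite{FKL}. That is false. Relation \eqref{32} alone only forces $\alpha_{j,a}$ to be \emph{constant} in $a$ (namely $\alpha_{j,a}=\alpha_{j,0}$ for all $a$): the shift maps $L_a\mapsto cL_{a+j}$ satisfy the Witt-type $\tfrac12$-derivation identity exactly, which is precisely why the uncentered Witt algebra admits nontrivial $\tfrac12$-derivations and nontrivial transposed Poisson structures --- the very phenomenon recalled in the introduction. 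So nothing in \eqref{32} kills the constant $\alpha_{j,0}$, and your subsequent step ``feeding $\alpha\equiv0$ back into \eqref{37}'' rests on an unestablished premise. The same issue recurs for \eqref{33} when $\lambda=1$, where the twisted recursion degenerates to the untwisted one and again only yields constancy of $\alpha'_{j,a}$.

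What actually forces $\alpha_{j,0}=0$ is input from \emph{outside} the Witt-type relation: for $\lambda\neq1$ one must use the coupling with the $I$-generators, i.e.\ put $a=0$, $b=j$ in \eqref{37} to get $j(\lambda-1)\alpha_{j,0}=0$; for $\lambda=1$ one must use the central term, i.e.\ relation \eqref{34} with $b=-a-j$, which yields $(j+a)(aj+2a^2+1)\alpha_{j,0}=0$ and hence $\alpha_{j,0}=0$ after choosing $a$ outside the zero set. An analogous dichotomy ($\eqref{33}$ directly for $\lambda\neq1$; the central relation \eqref{c.1} for $\lambda=1$) is needed for $\alpha'_{j,0}$. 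Your proposal never invokes \eqref{34} or \eqref{c.1} for this purpose (you only use the central relations at the very end, after all four families are assumed zero), so the argument as written does not close. The ``delicate bookkeeping of exceptional indices'' you flag is in fact the easy part; the missing idea is that the vanishing of the constant solution of \eqref{32}--\eqref{33} is \emph{not} a Witt-algebra fact but a consequence of the $I$-sector and of the central extension.
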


\begin{proof} Let $j\neq 0$ and  $\varphi_{j} \in \Delta_{j}(g(G,\lambda)),$ then we consider the following cases.

\begin{enumerate}
    \item[$\bullet$] 
First, we consider the relations (\ref{32}), (\ref{34}) and (\ref{37}).
\begin{enumerate}[(a)]
    \item 
Putting $b=0$ in (\ref{32}), we have 
$(j-a)\alpha_{j,a}=(j-a)\alpha_{j,0},$ which implies 
$\alpha_{j,a}=\alpha_{j,0}$ for $a\neq j.$ 
    \item Setting $b=-a=j$ in (\ref{32}), we get $\alpha_{j,0}=\alpha_{j,j},$ which derives
$\alpha_{j,a}=\alpha_{j,0}$ for all $a \in G.$ 
\item If  $\lambda \neq1,$ then  taking $a=0,$ $b=j$ in (\ref{37}), we have $j(\lambda -1)\alpha_{j,0}=0,$ implies $\alpha_{j,0}=0.$  
    \item 
If  $\lambda =1,$ then taking $b=0$ and $a=-j$ in (\ref{34}), we obtain  $\gamma_{j}=\frac{1}{24}\alpha_{j,0}(1-j^{2}).$ 
 Setting $b=-a-j$ in (\ref{34}), we get $(j+a)(aj+2a^2+1)\alpha_{j,0}=0.$  
 Choosing $a$ such that $(j+a)(aj+2a^2+1)\neq0,$ we obtain
$\alpha_{j,0}=0.$ 

\item[$\star$] Hence, 
$\alpha_{j,a}=0$ for all $a\in G.$

\end{enumerate}

  \item[$\bullet$] Second, we  consider the relations (\ref{33}) and (\ref{c.1}). 
\begin{enumerate}[(a)]
    \item If  $\lambda \neq 1,$ then setting $b=0,$ $a=j$ in (\ref{33}), we have $j(\lambda - 1) \alpha'_{j,0}=0,$ which implies $\alpha'_{j,0}=0.$
Taking $b=0,$ $a\neq j$ in (\ref{33}), we get $\alpha'_{j,a}=0$ for $a\neq j.$ Finally, setting $a=j,$ $b=2j$ in (\ref{33}) gives $j(\lambda - 1) \alpha'_{j,j}=0,$ which derives $\alpha'_{j,j}=0.$

\item If $\lambda = 1,$ then similarly to the previous case using (\ref{33}) with $b=0,$  we have 
$(j-a)\alpha_{j,a}'=(j-a)\alpha_{j,0}',$ which implies 
$\alpha_{j,a}'=\alpha_{j,0}'$ for $a\neq j.$ Then setting $b=-a=j$ in (\ref{33}), we get $\alpha_{j,0}'=\alpha_{j,j}',$ which derives
$\alpha_{j,a}'=\alpha_{j,0}'$ for all $a \in G.$ 
Now taking $b=0$ and $a=-j$ in (\ref{c.1}), we obtain  $\gamma_{j}^{(1)}=\frac{1}{24}\alpha_{j,0}'(1-j^{2}).$ Then
setting $b=-a-j$ in (\ref{c.1}), we get $a(j+a)(j+2a)\alpha_{j,0}'=0.$  Choosing $a\notin \{-\frac{j}{2};-j;0\},$ we obtain
$\alpha_{j,0}'=0.$ 
\item[$\star$]  
Hence, $\alpha_{j,a}'=0$ for all $a\in G.$

\end{enumerate}


\item[$\bullet$] Third, we consider the relation (\ref{36}).
\begin{enumerate}[(a)]
    \item
Setting $a=0$ in (\ref{36}), we derive $(b-j)\beta_{j,b}=0,$ which implies $\beta_{j,b}=0$ for $b\neq j.$
\item Letting $b=-a=j$ in (\ref{36}) gives $\beta_{j,j}=0.$
\item[$\star$]  
Hence, $\beta_{j,b}=0$ for all $b\in G.$
\end{enumerate}

\item[$\bullet$] Fourth, we consider the relation  (\ref{37}).
\begin{enumerate}[(a)]

\item Taking $a=0$ in (\ref{37}), we have $(b-j)\beta'_{j,b}=0,$ which derives $ \beta'_{j,b}=0$ for $b\neq j.$

\item Using (\ref{37}) with $a=-j,$ $b=j$ and $a=j,$ $b=0$
we obtain $j(\lambda +2)\beta'_{j,j}=0$ 
and $j\lambda \beta'_{j,j}=0,$ which implies $\beta'_{j,j}=0.$ 

\item[$\star$]  
Hence,  $\beta'_{j,b}=0$ for all $b\in G.$
\end{enumerate}
\end{enumerate}

Finally, using (\ref{34}), (\ref{c.1}), (\ref{35}), (\ref{38}), (\ref{c.2}) and (\ref{39}) with $b=0,$ $a=-j,$ we obtain \begin{center}
    $\gamma_{j}=0,$ $\gamma^{(i)}_{j}=0,$ $\mu_{j}=0$
and $\mu^{(i)}_{j}=0$ for $1\leq i\leq\nu.$ 
\end{center}
This proves that $\varphi_{j}=0$ for all $j\neq 0.$
\end{proof}

\begin{lemma}
 \label{thm9} 
If $\lambda\notin \{-2; -1; 1\},$ then 
$\Delta(g(G,\lambda))=\langle {\rm Id} \rangle.$ \end{lemma}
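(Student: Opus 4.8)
The plan is to compute the degree-zero part $\Delta_0(g(G,\lambda))$, since by \Cref{l01} together with \Cref{MainLemma} we already know $\Delta(g(G,\lambda)) = \Delta_0(g(G,\lambda))$ whenever $\lambda \neq 0$, and the case $\lambda = 0$ is handled by the reduction to \cite{yh21}. So let $\varphi = \varphi_0 \in \Delta_0(g(G,\lambda))$ and write out the structure constants as in the displayed parametrization, with $j = 0$. The identity $\mathrm{Id}$ is clearly a $\frac12$-derivation of degree zero, so it suffices to show the space is one-dimensional, i.e. that $\varphi$ is determined up to scalar.

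First I would extract, from relations (\ref{32}) and (\ref{36})--(\ref{37}) with $j = 0$, the diagonal constraints on $\alpha_{0,a}$, $\beta_{0,a}$, $\beta'_{0,a}$. Setting $b=0$ in (\ref{32}) gives $\alpha_{0,a} = \alpha_{0,0}$ for $a \neq 0$, and (\ref{32}) at general $a,b$ forces all $\alpha_{0,a}$ equal; call this common value $c$. From (\ref{36}) with $a=0$ one gets $(b)\beta_{0,b}=0$, hence $\beta_{0,b}=0$ for $b\neq 0$, and then (\ref{36}) at generic $a,b$ (using $\lambda\neq 0$) kills $\beta_{0,0}$ as well, so $\beta_{0,a}\equiv 0$ — the "$L\to I$ shift" component vanishes. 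Relation (\ref{37}) with $j=0$ reads $2(b-\lambda a)\beta'_{0,a+b} = \alpha_{0,a}(b-\lambda a) + \beta'_{0,b}(b-\lambda a)$, i.e. $2\beta'_{0,a+b} = c + \beta'_{0,b}$ whenever $b \neq \lambda a$; setting $b=0$ gives $\beta'_{0,a} = \tfrac12(c + \beta'_{0,0})$ for $a\neq 0$ with $a$ generic, so $\beta'_{0,a}$ is constant off $0$, and comparing two such values forces $\beta'_{0,a} = c$ for all $a$ (again using $\lambda \neq 0,1$ to make the excluded locus $b=\lambda a$ avoidable). Symmetrically, relation (\ref{33}) with $j=0$, $2(b-a)\alpha'_{0,a+b} = -\alpha'_{0,a}(a-\lambda b) + \alpha'_{0,b}(b-\lambda a)$, should force $\alpha'_{0,a} \equiv 0$: put $b=0$ to get $(-a)\alpha'_{0,a} = \alpha'_{0,a}\cdot(-\lambda a)\cdot(\text{something})$ — more precisely $2(-a)\alpha'_{0,a} = -\alpha'_{0,a}\,a + \alpha'_{0,0}(-\lambda a)$, giving $-a\,\alpha'_{0,a} = -\lambda a\,\alpha'_{0,0}$, so $\alpha'_{0,a} = \lambda\alpha'_{0,0}$ for $a\neq 0$; then a second specialization (e.g. $a = b$) pins down $\alpha'_{0,0}$ and hence all $\alpha'_{0,a}=0$, because $\lambda \neq 1$ breaks the would-be symmetry. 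This already shows $\varphi(L_a) = cL_a$ and $\varphi(I_a) = cI_a$ modulo the central terms, which are absent here since $\lambda \notin \{1,-2\}$ (so the $\gamma_0, \gamma_0^{(i)}, \mu_0, \mu_0^{(i)}$ do not occur in the parametrization, and $C_{LI}^{(\cdot)}$ is not in the basis).

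It remains to handle the action on the center: $\varphi(C_L)$. With $\lambda \notin \{1,-2\}$ the only basis central element is $C_L$, and $\varphi(C_L) = d_0 C_L$. Relations (\ref{34}) and (\ref{38}) with $j = 0$ and $b = -a$ become $\tfrac{1}{12}(a^3-a)d_0 = \tfrac{1}{24}\big((a^3-a)\alpha_{0,a} + (a^3-a)\alpha_{0,-a}\big) = \tfrac{1}{24}(a^3-a)\cdot 2c = \tfrac{1}{12}(a^3-a)c$ (using $\alpha_{0,\pm a} = c$), hence $d_0 = c$ after choosing $a$ with $a^3 - a \neq 0$. Thus $\varphi = c\,\mathrm{Id}$ and $\Delta_0(g(G,\lambda)) = \langle \mathrm{Id}\rangle$, completing the proof.

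The main obstacle, I expect, is the bookkeeping around the excluded loci: many of the specializations of (\ref{32})--(\ref{39}) are only valid when a coefficient like $b - a$, $b - \lambda a$, or $b + a + j$ is nonzero, and one must check that $G$ is large enough (it has rank $\geq 1$, so it is infinite) to choose such elements, and that the hypothesis $\lambda \notin \{-2,-1,1\}$ is exactly what is needed to prevent the linear systems from having extra solutions. The roles of $\lambda \neq 1$ (to make the $L$–$I$ block non-symmetric and kill $\alpha'$ and the $\lambda=1$ central term) and $\lambda \neq -2$ (to avoid the extra $C_{LI}^{(i)}$ centers) are clear; $\lambda \neq -1$ presumably only enters because $\check g(G,-1)$ has no universal central extension, so $g(G,-1)$ must be treated as a separate (and, per the paper's summary, genuinely exceptional) case — here it is simply excluded from the hypothesis and no work is needed for it.
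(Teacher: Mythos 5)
Your overall route --- reducing to $\Delta_0$ via \Cref{l01} and \Cref{MainLemma} and then specializing \cref{32}--\cref{38} at $j=0$ --- is the same as the paper's, and your treatment of the coefficients $\alpha_{0,a}$, $\alpha'_{0,a}$, $\beta_{0,a}$, $\beta'_{0,a}$ is essentially sound (your derivation of $\beta'_{0,0}=\alpha_{0,0}$ from two off-diagonal specializations even avoids the paper's use of $\lambda\neq-1$ at that step). However, there is a genuine gap in how you treat the center. Since $C_L$ is a basis element of $g(G,\lambda)$ for \emph{every} $\lambda$, a general degree-zero $\frac12$-derivation has $\varphi_0(L_0)=\alpha_{0,0}L_0+\gamma_0C_L$ and $\varphi_0(I_0)=\beta'_{0,0}I_0+\mu_0C_L$ with a priori nonzero $\gamma_0,\mu_0$; your assertion that these coefficients ``do not occur in the parametrization'' when $\lambda\notin\{1,-2\}$ is false (the $\delta_{\lambda,1}$ factors in the paper's displayed ansatz are evidently typos, since the paper's own proof proceeds to kill $\gamma_0$ and $\mu_0$). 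Killing $\mu_0$ is exactly where the hypothesis $\lambda\neq-1$ does its work: relation \cref{38} with $j=0$, $b=-a$ gives $(\lambda+1)\mu_0=0$, and at $\lambda=-1$ this coefficient survives and produces precisely the nontrivial $\frac12$-derivation $\varphi(I_0)=C_L$ (hence the structure $I_0\cdot I_0=\beta C_L$) recorded in the paper. So your closing remark that $\lambda\neq-1$ ``only enters because $\check g(G,-1)$ has no universal central extension \dots\ no work is needed for it'' is a substantive misunderstanding, not a presentational choice. Relatedly, your computation of $d_0=\alpha_{0,0}$ from \cref{34} silently assumes $\gamma_0=0$: with $b=-a$ that relation reads $-2a\gamma_0+\tfrac{1}{12}(a^3-a)d_0=\tfrac{1}{12}(a^3-a)\alpha_{0,0}$, so one must first set $a=1$ (where $a^3-a=0$) to get $\gamma_0=0$, and only then choose $a$ with $a^3\neq a$.

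Two smaller slips: the specialization $a=b$ in \cref{33} is vacuous (both sides vanish identically), so it cannot ``pin down'' $\alpha'_{0,0}$ --- use $b=2a$, $a\neq0$, as in the paper, which yields $\lambda(\lambda-1)\alpha'_{0,0}=0$; and eliminating $\beta_{0,0}$ via $b=-a$ in \cref{36} relies on the coefficient $-2a(\lambda+1)$ being nonzero, i.e.\ on $\lambda\neq-1$, not on $\lambda\neq0$ as you state.
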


\begin{proof}
Thanks to Lemma \ref{MainLemma}, it is sufficient to consider only $\frac{1}{2}$-derivation  $\varphi_{0}$, i.e., $j=0.$
Let us consider the following steps.
\begin{enumerate}[(a)]
    \item Letting $b=0$ in (\ref{32}), we have $a\alpha_{0,a}=a\alpha_{0,0},$ which implies $\alpha_{0,a}=\alpha_{0,0}$  for all $a\in G.$

\item Taking $b=0$ in (\ref{33}), we get
$\alpha'_{0,a}=\lambda\alpha'_{0,0}.$ Then, putting $a\neq 0,$ and  $b=2a$ in (\ref{33}), we obtain $\lambda(\lambda-1)\alpha'_{0,0}=0.$ 
Hence, $\alpha'_{0,a}=0$ for all $a\in G.$

\item Now, setting $a=0$ in (\ref{36}), we derive $b\beta_{0,b}=0,$ which implies $\beta_{0,b}=0$ for $b\neq 0.$
Letting $b=-a$ in (\ref{36}), we obtain $\beta_{0,0}=0.$
Hence, $\beta_{0,b}=0$ for all $b\in G.$

\item Putting $a=0$ in (\ref{37}) gives $b\beta'_{0,b}=b\alpha_{0,0}$, which derives $\beta'_{0,b}=\alpha_{0,0}$ for $b\neq 0.$
Taking $a\neq 0,$ $b=-a,$ in (\ref{37}), we get $(\lambda+1)(\beta'_{0,0}-\alpha_{0,0})=0.$
Hence, $\beta'_{0,b}=\alpha_{0,0}$ for all $b\in G.$

\item Using (\ref{34}) with $a=1,$ $b=-1,$ we obtain $\gamma_{0}=0$ and 
with $a^3\neq a,$ $b=-a,$ we get $d_0=\alpha_{0,0}.$

\item Now putting $b=-a$ in (\ref{38}), we have $(\lambda+1)\mu_{0}=0.$
Hence, we have $\mu_{0}=0.$ 

\end{enumerate}
As a conclusion, we obtain $\varphi_0=\alpha_{0,0}\textrm{Id}.$
\end{proof}

\begin{lemma}\label{thm(-2)}
$\Delta(g(G,-2)=\langle {\rm Id} \rangle.$    
\end{lemma}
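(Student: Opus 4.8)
The plan is to proceed exactly as in the proof of Lemma~\ref{thm9}: by Lemma~\ref{MainLemma} it suffices to determine $\Delta_0(g(G,-2))$, so fix $\varphi_0\in\Delta_0(g(G,-2))$ and extract its coefficients from the relations (\ref{32})--(\ref{39}) specialized to $j=0$ and $\lambda=-2$. The arithmetic that made the case $\lambda\notin\{-2,-1,1\}$ work survives here: $\lambda(1-\lambda)=-6\ne0$, $\lambda+1=-1\ne0$, and $G$ is infinite (being free of rank $\nu\ge1$), so it still contains an element $a$ with $a^{3}\ne a$. The only genuinely new feature is the presence of the central elements $C_{LI}^{(i)}$, $2\le i\le\nu$.

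Concretely, I would first read off from (\ref{32}) (with $b=0$) that $\alpha_{0,a}=\alpha_{0,0}$ for all $a$; from (\ref{33}) (with $b=0$, then $b=2a$) that $\alpha'_{0,0}=0$ and hence $\alpha'_{0,a}=0$; from (\ref{36}) (with $a=0$, then $b=-a$) that $\beta_{0,a}=0$; and from (\ref{37}) (with $a=0$, then $b=-a$) that $\beta'_{0,a}=\alpha_{0,0}$. This already forces $\varphi_0(L_k)=\alpha_{0,0}L_k+(\text{central})$ and $\varphi_0(I_k)=\alpha_{0,0}I_k+(\text{central})$, where the central correction is supported on $k=0$.

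It then remains to kill, or identify, the central parameters. Using (\ref{34}) with $b=-a$ together with $\alpha_{0,a}=\alpha_{0,0}$ and choosing $a$ with $a^{3}\ne a$ gives $d_0=\alpha_{0,0}$; the analogous substitution in (\ref{35}), now with $\alpha'_{0,a}=0$, shows that $\tfrac1{12}(a^{2}-1)d_0^{(i)}$ does not depend on $a\in G\setminus\{0\}$, whence $d_0^{(i)}=0$ and $\gamma^{(i)}_0=0$ for $2\le i\le\nu$. Finally, (\ref{38}) with $b=-a$ and $\beta_{0,a}=0$ gives $\sum_{i=2}^{\nu}a_{(i)}d_i=0$ for every $a\in G$, and (\ref{39}) with $b=-a$, using $\alpha_{0,a}=\beta'_{0,a}=\alpha_{0,0}$, produces an identity whose two sides are linear in the $\mathbb Z$-coordinates $a_{(1)},\dots,a_{(\nu)}$ of $a$. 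Evaluating these on $a=\epsilon_1,\dots,\epsilon_\nu$ and using that a $\mathbb Z$-basis of $G$ is linearly independent over $\mathbb Q$ — this is precisely where the hypothesis that $G$ is free of rank $\nu$ for $\lambda=-2$ enters — separates the variables and yields $d_i=0$, $\mu^{(i)}_0=0$ and $d^{(i)}_t=\delta_{t,i}\alpha_{0,0}$.

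Putting everything together, $\varphi_0(L_k)=\alpha_{0,0}L_k$, $\varphi_0(I_k)=\alpha_{0,0}I_k$, $\varphi_0(C_L)=\alpha_{0,0}C_L$ and $\varphi_0(C_{LI}^{(k)})=\alpha_{0,0}C_{LI}^{(k)}$, so $\varphi_0=\alpha_{0,0}\,{\rm Id}$; together with Lemma~\ref{MainLemma} this proves $\Delta(g(G,-2))=\langle {\rm Id}\rangle$. I expect the last step — disentangling the central coefficients from the coordinate-linear relations (\ref{38})--(\ref{39}) — to be the only part requiring genuine care, since it is the one place the freeness assumption on $G$ is actually needed; everything preceding it is a routine repetition of the argument for $\lambda\notin\{-2,-1,1\}$.
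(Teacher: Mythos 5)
Your proposal is correct and follows essentially the same route as the paper: reduce to $j=0$ via Lemma \ref{MainLemma}, specialize (\ref{32})--(\ref{39}) to $\lambda=-2$, determine the diagonal coefficients first, and then separate the central coefficients by evaluating the coordinate-linear relations on the $\mathbb{Z}$-basis $\epsilon_1,\dots,\epsilon_\nu$ (the paper instead replaces $a$ by $a+1$ and $a+\epsilon_j$, which is the same linear algebra). The only slip is that (\ref{38}) with $b=-a$ actually reads $a\mu_0+\sum_{i=2}^{\nu}a_{(i)}d_i=0$ rather than $\sum_{i=2}^{\nu}a_{(i)}d_i=0$, but your separation argument applies verbatim to the extra term and yields $\mu_0=0$ as well.
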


\begin{proof}
Thanks to Lemma \ref{MainLemma}, it is sufficient to consider only $\frac{1}{2}$-derivation  $\varphi_{0}$.
In the case of $\lambda =-2$ and $j=0$ the restrictions \eqref{32} -- \eqref{39} have the following form:
{\footnotesize{\begin{eqnarray} 
&&2(b-a)\alpha_{0,a+b}=(b-a)(\alpha_{0,a}+\alpha_{0,b}),\label{R1}
\end{eqnarray}
\begin{eqnarray}
&&2(b-a)\alpha'_{0,a+b}=\alpha'_{0,b}(b+2a)-\alpha'_{0,a}(2b+a),\label{R2}
\end{eqnarray}
\begin{eqnarray}
&&2(a^{3}-a)d_{0}-48a\gamma_{0}=(a^{3}-a)(\alpha_{0,a}+\alpha_{0,-a}),\label{R3}\end{eqnarray}
\begin{eqnarray}
&&(a^{3}-a)d^{(i)}_{0}-24a\gamma^{(i)}_{0}=
6a_{(i)}(\alpha'_{0,-a}+\alpha'_{0,a}), \quad 2\leq i\leq\nu,\label{R4}\end{eqnarray}
\begin{eqnarray}
&&2(b+2a)\beta_{0,a+b}=\beta_{0,b}(b-a),\label{R5}\end{eqnarray}
\begin{eqnarray}
&&2(b+2a)\beta'_{0,a+b}=(b+2a)(\alpha_{0,a}+\beta'_{0,b}),\label{R6}\end{eqnarray}
\begin{eqnarray}
&&24a\mu_{0}+24\sum_{i=2}^{\nu}a_{(i)}d_{i}=(a^{3}-a)\beta_{0,-a}, \quad 2\leq i\leq\nu,\label{R7}\end{eqnarray}
\begin{eqnarray}
&&2a\mu^{(i)}_{0}+2\sum_{t=2}^{\nu}a_{(t)}d_{t}^{(i)}=(\alpha_{0,a}+\beta'_{0,-a})a_{(i)}, \quad 2\leq i\leq\nu. \label{R8}
\end{eqnarray}}}

\begin{enumerate}

\item[$\bullet$] First, we  consider the relations (\ref{R1}) and (\ref{R2}). 

\begin{enumerate}[(a)]
    \item 
Letting $b=0$ in (\ref{R1}), we have $a\alpha_{0,a}=a\alpha_{0,0},$ which implies $\alpha_{0,a}=\alpha_{0,0}$  for all $a\in G.$

\item Taking $b=0$ in (\ref{R2}), we get
$\alpha'_{0,a}=-2\alpha'_{0,0}.$ 

\item Then, putting $a\neq 0,$ $b=2a$ in (\ref{R2}), we obtain $\alpha'_{0,0}=0.$ Hence, $\alpha'_{0,a}=0$ for all $a\in G.$
\end{enumerate}

\item[$\bullet$] Second, we  consider   the relations (\ref{R5}) and (\ref{R6}). 
\begin{enumerate}[(a)]
\item Now, setting $a=0$ in (\ref{R5}), we derive $b\beta_{0,b}=0,$ which implies $\beta_{0,b}=0$ for $b\neq 0.$

\item Letting $b=-a$ in (\ref{R5}), we obtain $\beta_{0,0}=0.$
Hence $\beta_{0,b}=0$ for all $b\in G.$

\item Putting $a=0$ in (\ref{R6}) gives $b\beta'_{0,b}=b\alpha_{0,0}$, which derives $\beta'_{0,b}=\alpha_{0,0}$ for $b\neq 0.$

\item Taking $b=-a$ in (\ref{R6}), we get $\beta'_{0,0}=\alpha_{0,0}.$
\end{enumerate}

\item[$\bullet$] Third, we consider the relations (\ref{R3}) and (\ref{R4}). We have the following cases.

\begin{enumerate}[(a)]
    \item Using (\ref{R3}) with $a=1,$ we obtain $\gamma_{0}=0$ and 
with $a^3\neq a,$ we get $d_0=\alpha_{0,0}.$

\item Similarly, from (\ref{R4}) by $a=1$ and $a^3\neq a,$ we derive $\gamma_{0}^{(i)}=0$ and $d_0^{(i)}=0,$ for $2\leq i\leq\nu.$
\end{enumerate}

\item[$\bullet$] Finally, we consider the relations (\ref{R7}) and (\ref{R8}).

\begin{enumerate}[(a)]
\item From (\ref{R7}), we obtain $a\mu_{0}+\sum\limits_{i=2}^{\nu}a_{(i)}d_{i}=0.$
In this relation, replacing $a$ with $a+1$ and  $a+\epsilon_{j} \ (2 \leq j \leq \nu),$ we get
$$(a+1)\mu_{0}+\sum_{i=2}^{\nu}a_{(i)}d_{i}=0, \quad (a+\epsilon_{j})\mu_{0}+(a_{(j)}+1)d_{j}+\sum_{i=2, i \neq j}^{\nu}a_{(i)}d_{i}=0.
$$
 Combining these relations, we obtain $\mu_{0}=0$ and $d_{i}=0$ for $2\leq i\leq\nu.$

\item    Now considering (\ref{R8}), we get 
$$
a\mu^{(i)}_{0}+\sum_{t=2}^{\nu}a_{(t)}d_{t}^{(i)}=\alpha_{0,0}a_{(i)}, \quad 2\leq i\leq\nu.
$$
 Similarly to the previous case, replacing $a$ with $a+1$ and  $a+\epsilon_{j} \ (2 \leq j \leq \nu),$ we obtain
$\mu^{(i)}_{0}=0,$  $d_{i}^{(i)}=\alpha_{0,0}$ for $2\leq i\leq\nu$
and $d_{t}^{(i)}=0$ for $t\neq i.$
 \end{enumerate}
\end{enumerate}

 As a conclusion, we obtain that $\varphi_0=\alpha_{0,0}\textrm{Id}.$
\end{proof}

 \begin{lemma}\label{thm(1)} 
$\Delta(g(G,1)) =\langle {\rm Id}, \varphi \rangle,$ where 
$\varphi(L_{k})= I_{k}$ and
$\varphi(C_{L})=  C_{LI}^{(1)}.$
\end{lemma}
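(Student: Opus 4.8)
The plan is to combine Lemma~\ref{MainLemma}, which already eliminates all homogeneous components of nonzero degree, with a direct analysis of the degree-zero component $\varphi_0 \in \Delta_0(g(G,1))$ using the specialized forms of the relations \eqref{32}--\eqref{39} at $\lambda=1$, $j=0$. So the only real work is to solve these equations and show that, modulo $\mathrm{Id}$, the one remaining free parameter is the one that produces the extra $\frac12$-derivation $\varphi$ with $\varphi(L_k)=I_k$, $\varphi(C_L)=C_{LI}^{(1)}$.

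First I would write down \eqref{32}, \eqref{33}, \eqref{c.1}, \eqref{36}, \eqref{37}, \eqref{c.2} with $j=0$, $\lambda=1$. From \eqref{32} with $b=0$ one gets $\alpha_{0,a}=\alpha_{0,0}$ for all $a$ (call this constant $\alpha$), exactly as in Lemma~\ref{thm9}(a). From \eqref{36} with $a=0$ one gets $b\beta_{0,b}=0$, hence $\beta_{0,b}=0$ for $b\neq 0$, and then $b=-a$ gives $\beta_{0,0}=0$; so $\beta_{0,\cdot}\equiv 0$ (the map $I\mapsto L$ part vanishes). The new feature at $\lambda=1$ is \eqref{33}: with $b=0$ it reads $-2a\alpha'_{0,a}=-a\alpha'_{0,a}+a\alpha'_{0,0}$, i.e. $\alpha'_{0,a}=\alpha'_{0,0}$ for all $a\neq 0$, and setting $b=-a=$ (a nonzero element) shows $\alpha'_{0,0}=\alpha'_{0,-0}$ is consistent — so $\alpha'_{0,a}=:\alpha'$ is a free constant, unlike the $\lambda\neq 1$ case where the $\lambda(\lambda-1)$ factor killed it. Next, \eqref{37} at $\lambda=1$, $j=0$: $2(b-a)\beta'_{0,a+b}=\alpha_{0,a}(b-a)+\beta'_{0,b}(b-a)$, so for $b\neq a$ we get $\beta'_{0,a+b}=\tfrac12(\alpha+\beta'_{0,b})$; taking $a=0$ gives $\beta'_{0,b}=\alpha$ for $b\neq 0$, and $b=-a$ (nonzero) gives $\beta'_{0,0}=\alpha$. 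So $\beta'_{0,\cdot}\equiv\alpha$.

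Then I turn to the central-term coefficients. From \eqref{34} with $j=0$: $\tfrac1{12}(a^3-a)d_0=\tfrac1{24}(a^3-a)(\alpha_{0,a}+\alpha_{0,-a})=\tfrac1{12}(a^3-a)\alpha$, so picking $a$ with $a^3\neq a$ gives $d_0=\alpha$. From \eqref{c.1} with $j=0$: $\tfrac1{12}(a^3-a)d_0^{(1)}=\tfrac1{24}\big((a^3-a)\alpha'_{0,-a}-(a^3-a)\alpha'_{0,a}\big)=0$ whenever $a^3\neq a$, forcing $d_0^{(1)}=0$. From \eqref{38} at $\lambda=1$, $j=0$: $\tfrac1{12}(a^3-a)d_1=\tfrac1{24}(a^3-a)\beta_{0,-a}=0$, so $d_1=0$. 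From \eqref{c.2} at $\lambda=1$, $j=0$: $\tfrac1{12}(a^3-a)d_1^{(1)}=\tfrac1{24}\big((a^3-a)\alpha_{0,a}+(a^3-a)\beta'_{0,-a}\big)=\tfrac1{24}(a^3-a)(\alpha+\alpha)=\tfrac1{12}(a^3-a)\alpha$, hence $d_1^{(1)}=\alpha$. (The $\gamma_0$, $\mu_0$ scalars: \eqref{34} and \eqref{38} with $j=0$ carry no $\gamma_0$, $\mu_0$ term — those only appear for $j\neq 0$ via the $\delta_{a+b,-j}$ factor — so there is nothing further to determine there; if one prefers, the $\varphi_0(C_L)$ and $\varphi_0(C_{LI}^{(1)})$ expansions already contain only $d_0,d_0^{(1)},d_1,d_1^{(1)}$.) Collecting everything: $\varphi_0(L_k)=\alpha L_k+\alpha' I_k$, $\varphi_0(I_k)=\alpha I_k$, $\varphi_0(C_L)=\alpha C_L+\alpha' C_{LI}^{(1)}$, $\varphi_0(C_{LI}^{(1)})=\alpha C_{LI}^{(1)}$, which is precisely $\alpha\,\mathrm{Id}+\alpha'\varphi$ for the claimed $\varphi$. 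Combined with $\Delta_{j\neq0}=0$ from Lemma~\ref{MainLemma} and Lemma~\ref{l01}, this gives $\Delta(g(G,1))=\langle\mathrm{Id},\varphi\rangle$; one should also note $\dim$ is exactly $2$ because $\mathrm{Id}$ and $\varphi$ are visibly linearly independent (e.g. $\varphi(I_0)=I_0$ while $\varphi$ sends $L_0\mapsto I_0\neq 0$, so no scalar multiple of $\mathrm{Id}$ equals $\varphi$), and one may quickly double-check $\varphi$ really is a $\frac12$-derivation by plugging into \eqref{ph1}--\eqref{ph2} directly.

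I expect the main obstacle to be bookkeeping rather than conceptual: one must be careful that each substitution $b=0$, $b=-a$, $b=2a$ etc. is legitimate (i.e. the coefficient one divides by is nonzero for a suitable choice of $a\in G$, using that $G$ is infinite, and that the ``degenerate'' values of $a$ one excludes — like $a^3=a$ — are finitely many), and that one has extracted the coefficient of each basis vector $L_{a+b}$, $I_{a+b}$, $C_L$, $C_{LI}^{(1)}$ separately and consistently. The verification that no relation among \eqref{32}--\eqref{39} imposes a further constraint killing $\alpha'$ — in contrast to Lemmas~\ref{thm9} and \ref{thm(-2)} — is the crux, and it hinges exactly on the cancellation of the $(\lambda-1)$-type factors at $\lambda=1$, so I would highlight that comparison explicitly.
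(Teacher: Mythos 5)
Your overall strategy is exactly the paper's: invoke Lemma~\ref{MainLemma} to reduce to $\varphi_0$, then solve the $j=0$, $\lambda=1$ specializations of \eqref{32}--\eqref{39}. The treatment of $\alpha_{0,a}$, $\alpha'_{0,a}$, $\beta_{0,a}$, $\beta'_{0,a}$ matches the paper's steps (a)--(e). However, there are two concrete errors in your handling of the central-term equations.

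First, your parenthetical claim that ``\eqref{34} and \eqref{38} with $j=0$ carry no $\gamma_0$, $\mu_0$ term --- those only appear for $j\neq 0$ via the $\delta_{a+b,-j}$ factor'' is false: at $j=0$ that factor is $\delta_{a+b,0}$, which equals $1$ precisely on the diagonal $b=-a$ you are using. So \eqref{34} at $j=0$, $b=-a$ reads $-2a\gamma_0+\tfrac1{12}(a^3-a)d_0=\tfrac1{24}(a^3-a)(\alpha_{0,a}+\alpha_{0,-a})$ (the paper's \eqref{T12}), and likewise $\gamma_0^{(1)},\mu_0,\mu_0^{(1)}$ appear in \eqref{c.1}, \eqref{38}, \eqref{c.2}. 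These are genuine coefficients of $\varphi_0(L_0)$ and $\varphi_0(I_0)$ in your ansatz, so you must prove they vanish (the paper does this by taking $a=1$, which kills the $(a^3-a)$ terms and forces $\gamma_0=\gamma_0^{(1)}=\mu_0=\mu_0^{(1)}=0$, and only then $a=2$ to extract $d_0$, etc.). As written, your single substitution with $a^3\neq a$ gives one equation in two unknowns and does not determine $d_0$.

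Second, and more seriously, your evaluation of \eqref{c.1} is wrong: with $b=-a$ the right-hand side is $\tfrac1{24}\big((a^3-a)\alpha'_{0,-a}-(b^3-b)\alpha'_{0,a}\big)$ with $b^3-b=(-a)^3-(-a)=-(a^3-a)$, so it equals $\tfrac1{24}(a^3-a)(\alpha'_{0,-a}+\alpha'_{0,a})$, a \emph{sum}, not the difference you wrote. Your computation therefore yields $d_0^{(1)}=0$, whereas the correct value is $d_0^{(1)}=\alpha'_{0,0}=\alpha'$ (the paper's \eqref{T13}). This is not cosmetic: $d_0^{(1)}$ is exactly the coefficient realizing $\varphi(C_L)=C_{LI}^{(1)}$ in the lemma, and with $d_0^{(1)}=0$ the resulting map is not a $\frac12$-derivation (test it on $[L_2,L_{-2}]=-4L_0+\tfrac12 C_L$). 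Your final ``collecting everything'' paragraph silently writes $\varphi_0(C_L)=\alpha C_L+\alpha' C_{LI}^{(1)}$, which contradicts your own derivation. Both defects are repairable within your framework --- restore the $\gamma$/$\mu$ terms and kill them with $a=1$, and fix the sign in \eqref{c.1} --- but as it stands the proof does not establish the stated form of $\Delta(g(G,1))$. (There is also a harmless sign typo in your $b=0$ specialization of \eqref{33}.)
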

\begin{proof}  
Thanks to Lemma \ref{MainLemma}, it is sufficient to consider only $\frac{1}{2}$-derivation  $\varphi_{0}$.
In the case of $\lambda =1$ and $j=0$ the restrictions \eqref{32} -- \eqref{39} have the following form:
{\footnotesize{\begin{eqnarray} 
&&2(b-a)\alpha_{0,a+b}=(b-a)(\alpha_{0,a}+\alpha_{0,b}),\label{T10}\end{eqnarray}
\begin{eqnarray}
&&2(b-a)\alpha'_{0,a+b}=(b-a)(\alpha'_{0,a}+\alpha'_{0,b}),\label{T11}\end{eqnarray}
\begin{eqnarray}
&&2d_0(a^{3}-a)-48a\gamma_{0}=(a^{3}-a)(\alpha_{0,a}+\alpha_{0,-a}),\label{T12}\end{eqnarray}
\begin{eqnarray}
&&2d_0^{(1)}(a^{3}-a)-48a\gamma^{(1)}_{0}=(a^{3}-a)(\alpha'_{0,a} + \alpha'_{0,-a}),\label{T13}\end{eqnarray}
\begin{eqnarray}
&&2(b-a)\beta_{0,a+b}=(b-a)\beta_{0,b},\label{T14}\end{eqnarray}
\begin{eqnarray}
&&2(b-a)\beta'_{0,a+b}=(b-a)(\alpha_{0,a}+\beta'_{0,b}),\label{T15}\end{eqnarray}
\begin{eqnarray}
&&2d_{1}(a^{3}-a)-48a\mu_{0}=\beta_{0,-a}(a^{3}-a),\label{T16}\end{eqnarray}
\begin{eqnarray}
&&2d^{(1)}_{1}(a^{3}-a)-48a\mu^{(1)}_{0}=(a^{3}-a)(\alpha_{0,a}+\beta'_{0,-a})\label{T17}.
\end{eqnarray}}}

We will consider the obtained relations in the following steps.

\begin{enumerate}[(a)]
    \item 
    Setting $b=0$ in (\ref{T10}) and (\ref{T11}), we have  $\alpha_{0,a}=\alpha_{0,0},$  $\alpha'_{0,a}=\alpha'_{0,0}$ for all $a\in G.$ 

      \item Taking $a=0$ in (\ref{T14}), we obtain $\beta_{0,b}=0$ for $b\neq 0.$ 
      \item    Letting $b=-a$ in (\ref{T14}), we get $\beta_{0,0}=0.$

  \item Setting $a=0$ in (\ref{T15}), we have  $\beta'_{0,b}=\alpha_{0,0}$ for $b\neq 0.$ 
  
    \item Letting  $b=-a$ in (\ref{T15}), we get $\beta'_{0,0}=\alpha_{0,0}.$ \item Taking $a=1,$ in (\ref{T12}) and (\ref{T13}), we obtain $\gamma_{0}=\gamma^{(1)}_{0}=0.$
\item Finally, putting $a=2,$ in (\ref{T12}) and (\ref{T13}), we derive that  $d_0=\alpha_{0,0}$ and $d_0^{(1)}=\alpha'_{0,0}.$ 
\item Similarly, by (\ref{T16}) and (\ref{T17}), we can get $\mu_{0}=\mu^{(1)}_{0}=0$ and $d_{1}=0,$ $d^{(1)}_{1}=\alpha_{0,0}.$

\end{enumerate}

Therefore, we obtain that $\varphi_{0}$ has the form 
$$\begin{array}{lll}
\varphi_{0}(L_{k})=\alpha L_{k}+\alpha' I_{k},&&
\varphi_{0}(I_{k})=\alpha I_{k},\\
\varphi_{0}(C_{L})=\alpha C_{L}+\alpha' C_{LI}^{(1)},&&
\varphi_{0}(C^{(1)}_{LI})=\alpha C^{(1)}_{LI}.
\end{array}$$
\end{proof}

\begin{lemma}
$\Delta(g(G,-1)) =\langle {\rm Id}, \varphi \rangle,$ where 
 $\varphi(I_{0})=   C_{L}.$
\end{lemma}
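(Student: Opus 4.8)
The plan is to follow exactly the template already used in Lemmas~\ref{thm(1)} and \ref{thm(-2)}: by Lemma~\ref{MainLemma} we only need to determine $\varphi_0\in\Delta_0(g(G,-1))$, so I would first specialize the general relations \eqref{32}--\eqref{39} to the case $\lambda=-1$, $j=0$. Since $\lambda=-1\notin\{0,1,-2\}$, the central terms involving $\gamma_j,\gamma_j^{(i)},\mu_j,\mu_j^{(i)},d_0^{(1)},d_0^{(i)},d_1^{(1)},d_1^{(i)}$ all drop out (the relevant Kronecker deltas vanish), so the only surviving data are $\alpha_{0,a},\alpha'_{0,a},\beta_{0,a},\beta'_{0,a}$ together with $d_0$ (from $\varphi_0(C_L)$) and $d_1$ (from $\varphi_0(C_{LI}^{(1)})$, which for $\lambda=-1$ is just the coefficient of $C_L$). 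The surviving relations are: from \eqref{32}, $2(b-a)\alpha_{0,a+b}=(b-a)(\alpha_{0,a}+\alpha_{0,b})$; from \eqref{33}, $2(b-a)\alpha'_{0,a+b}=-\alpha'_{0,a}(a+b)+\alpha'_{0,b}(b+a)=(b-a)\cdot 0$ — wait, more carefully $2(b-a)\alpha'_{0,a+b}=\alpha'_{0,b}(b+a)-\alpha'_{0,a}(a+b)$; from \eqref{36}, $2(b+a)\beta_{0,a+b}=\beta_{0,b}(b-a)$; from \eqref{37}, $2(b+a)\beta'_{0,a+b}=\alpha_{0,a}(b+a)+\beta'_{0,b}(b+a)$; and from \eqref{34},\eqref{38} the two central-element relations linking $d_0,d_1$ to the $\alpha$'s and $\beta$'s.

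The key steps, in order, are as follows. First, from \eqref{32} with $b=0$ deduce $\alpha_{0,a}=\alpha_{0,0}=:\alpha$ for all $a$. Second, from \eqref{36} with $a=0$ get $(b+a)$... rather $2b\beta_{0,b}=b\beta_{0,b}$, hence $\beta_{0,b}=0$ for $b\neq 0$; then $b=-a$ in \eqref{36} gives $0=\beta_{0,0}\cdot(-2a)$... actually $2\cdot 0\cdot\beta_{0,0}=\beta_{0,-a}(-2a)$ forces $\beta_{0,0}$ unconstrained here, so instead pick another specialization ($a=b$ is degenerate; use $a\neq 0$, $b=0$ already done, then for $\beta_{0,0}$ use \eqref{38} or a different pair) — in any case conclude $\beta_{0,b}=0$ for all $b$. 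Third, from \eqref{37}: with the $\lambda=-1$ form $2(b+a)\beta'_{0,a+b}=(b+a)(\alpha+\beta'_{0,b})$, setting $a=0$ gives $\beta'_{0,b}=\alpha$ for $b\neq 0$, and a further specialization (say $b=-a$, $a\neq0$, where the factor $b+a$ vanishes so it gives nothing, hence use e.g. $a=1,b=-2$: $2(-1)\beta'_{0,-1}=(-1)(\alpha+\beta'_{0,-2})$, i.e. $2\beta'_{0,-1}=\alpha+\beta'_{0,-2}$, consistent) to pin down $\beta'_{0,0}=\alpha$ as well. Fourth, and this is the crucial point where $\lambda=-1$ differs: the relation \eqref{33} for $\alpha'$ at $\lambda=-1$ reads $2(b-a)\alpha'_{0,a+b}=(a+b)(\alpha'_{0,b}-\alpha'_{0,a})$; with $b=0$ this gives $-2a\,\alpha'_{0,a}=a(\alpha'_{0,0}-\alpha'_{0,a})$, i.e. $\alpha'_{0,a}=-\alpha'_{0,0}$ for $a\neq 0$, but then trying the next specialization ($a\neq 0$, $b=2a$: $2a\,\alpha'_{0,3a}=3a(\alpha'_{0,2a}-\alpha'_{0,a})$) should be consistent rather than forcing $\alpha'_{0,0}=0$ — this is the structural reason a nontrivial $\frac12$-derivation survives only for $\lambda=-1$. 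I would show $\alpha'_{0,a}$ is forced to be $0$ for $a\neq 0$ while $\alpha'_{0,0}$ remains free, giving the new map $\varphi$ with $\varphi(I_0)=C_L$ (note: $\varphi(L_k)$ has no $I$-component because \eqref{37} forces $\alpha'_{0,0}$ to enter only via $\varphi(I_0)$; more precisely, the asymmetry between \eqref{33} and \eqref{37} is that $\alpha'$ does \emph{not} feed back into $\beta'$ or $\alpha$ when $\lambda=-1$, decoupling the $C_L$-valued map on $I_0$).

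The main obstacle I anticipate is correctly tracking which central coefficients survive and proving that the free parameter is exactly $\alpha'_{0,0}$ attached to $\varphi(I_0)\mapsto C_L$ and nothing else — in particular showing $\alpha'_{0,a}=0$ for all $a\neq 0$, that $d_0=\alpha$ and $d_1=0$ (from the surviving central relations analogous to \eqref{34},\eqref{38} with $a^3\neq a$ and $a=1$), and that $\beta'_{0,0}=\alpha$, so that after subtracting $\alpha\,\mathrm{Id}$ the only remaining freedom is the one-dimensional span of $\varphi$. Once the sterile relations are cleared away the computation is a finite check essentially identical in flavor to Lemma~\ref{thm(-2)}, so I would organize it as four bullet blocks (for $\alpha$, for $\beta$, for $\beta'$, for $\alpha'$ together with the central coefficients) and conclude $\varphi_0=\alpha\,\mathrm{Id}+\alpha'_{0,0}\varphi$.
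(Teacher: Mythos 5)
The paper gives no details here (it just says the proof is similar to Lemmas~\ref{thm9} and \ref{thm(1)}), so I am comparing your plan against what the computation actually yields. Your routine steps for $\alpha_{0,a}$, $\beta_{0,a}$, $\beta'_{0,a}$ are fine, but your identification of where the extra one-dimensional freedom lives is wrong, and this is the whole content of the lemma. At the outset you discard all the central coefficients $\gamma_j,\mu_j,\dots$ on the grounds that ``the relevant Kronecker deltas vanish.'' That is exactly the wrong move: for $\lambda=-1$ the algebra still has the central element $C_L$, and the ansatz for $\varphi_0(I_0)$ must be allowed a term $\mu_0 C_L$ (the $\delta_{\lambda,1}$ in the paper's ansatz is an artifact of the cases treated there; note that for $\lambda=-1$ one has $[L_a,I_{-a}]=0$, so $I_0\notin[\mathfrak L,\mathfrak L]$ and nothing restricts $\varphi_0(I_0)$ to the derived subalgebra). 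The only relation constraining $\mu_0$ is \eqref{38} with $b=-a$, which reads $(\lambda+1)\mu_0=0$ and is vacuous precisely at $\lambda=-1$. That free parameter $\mu_0$ is the map $\varphi(I_0)=C_L$ of the statement. Your plan never produces it.

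Instead you locate the freedom in $\alpha'_{0,0}$, and this fails on two counts. First, $\alpha'_{0,0}$ is the coefficient of $I_0$ in $\varphi_0(L_0)$, so even if it were free it would give a map of the shape $L_k\mapsto I_k$ (as in the $\lambda=1$ case), not $I_0\mapsto C_L$. Second, it is not free: from \eqref{33} with $b=0$ you correctly get $\alpha'_{0,a}=-\alpha'_{0,0}$ for $a\neq0$, but then $a\neq0$, $b=2a$ gives $2a\,\alpha'_{0,3a}=3a(\alpha'_{0,2a}-\alpha'_{0,a})=0$, hence $\alpha'_{0,0}=0$ (this is the specialization $\lambda(\lambda-1)\alpha'_{0,0}=0$ of Lemma~\ref{thm9}, which at $\lambda=-1$ reads $2\alpha'_{0,0}=0$). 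So your ``structural reason a nontrivial $\frac12$-derivation survives'' is not consistent with the equations; carried out as written, your computation would conclude $\Delta(g(G,-1))=\langle\mathrm{Id}\rangle$, contradicting the lemma. The correct degeneracies at $\lambda=-1$ are the two relations $(\lambda+1)(\beta'_{0,0}-\alpha_{0,0})=0$ (harmless, since $\beta'_{0,0}=\alpha_{0,0}$ still follows from \eqref{37} with $b=0$, $a\neq0$) and $(\lambda+1)\mu_0=0$ (which is what leaves $\mu_0$, i.e.\ $\varphi(I_0)=C_L$, undetermined).
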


\begin{proof} The proof is similar to the proofs of Lemma \ref{thm9} and Lemma \ref{thm(1)}.
\end{proof}

\begin{theorem} 
If $(g(G, \lambda), \cdot, [\cdot, \cdot])$ is a nontrivial transposed Poisson algebra, 
then $\lambda=-1,$
$(g(G, \lambda), \cdot, [\cdot, \cdot])$ is a nontrivial   Poisson algebra, and  $I_{0} \cdot I_{0} =  \beta C_{L}.$
\end{theorem}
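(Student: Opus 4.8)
The plan is to use Lemma~\ref{l1}: in any transposed Poisson structure on $g(G,\lambda)$ each left multiplication $L_z$ is a $\frac12$-derivation of the underlying Lie algebra. Combining this with the classification of $\frac12$-derivations already established --- $\Delta(g(G,0))=\langle{\rm Id}\rangle$ (recalled above, following \cite{yh21}), $\Delta(g(G,\lambda))=\langle{\rm Id}\rangle$ for $\lambda\notin\{-2,-1,0,1\}$ by Lemma~\ref{thm9}, and $\Delta(g(G,-2))=\langle{\rm Id}\rangle$ by Lemma~\ref{thm(-2)} --- we see that whenever $\lambda\notin\{-1,1\}$ the Lie algebra $g(G,\lambda)$ has no non-trivial $\frac12$-derivation, so by Lemma~\ref{lempr} the only transposed Poisson structure on it would have zero multiplication, which is excluded by the definition of a transposed Poisson algebra. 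It therefore remains to treat the two values $\lambda=1$ and $\lambda=-1$, where $\Delta(g(G,\lambda))=\langle{\rm Id},\varphi\rangle$ with $\varphi$ as in Lemma~\ref{thm(1)} and in the lemma computing $\Delta(g(G,-1))$, respectively. In each of these cases I would write $L_z=\psi(z)\,{\rm Id}+\chi(z)\,\varphi$ for linear functionals $\psi,\chi\colon g(G,\lambda)\to\mathbb{C}$ (these are linear because $z\mapsto L_z$ is linear and $\{{\rm Id},\varphi\}$ is a basis of $\Delta(g(G,\lambda))$), and then extract all information from the commutativity relation $L_z(w)=L_w(z)$.

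For $\lambda=1$, recall $\varphi(L_k)=I_k$, $\varphi(C_L)=C_{LI}^{(1)}$ and $\varphi(I_k)=\varphi(C_{LI}^{(1)})=0$. Evaluating $L_z(w)=L_w(z)$ on $z=L_a$, $w=L_b$ with $a\neq b$ and using the linear independence of $L_a,L_b,I_a,I_b$ forces $\psi(L_a)=\chi(L_a)=0$ for all $a$; evaluating on $z=L_a$, $w=I_b$ then gives $\psi(I_b)=\chi(I_b)=0$; and evaluating against $C_L$ and $C_{LI}^{(1)}$ gives $\psi\equiv\chi\equiv0$ on the whole algebra. Thus the product is identically zero, which contradicts the definition of a transposed Poisson algebra; hence $\lambda\neq1$.

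For $\lambda=-1$ the algebra has basis $\{L_a,I_a,C_L\}$, and $\varphi(I_0)=C_L$ while $\varphi$ annihilates every other basis vector, so $\varphi(w)=w_{I_0}C_L$ where $w_{I_0}$ denotes the $I_0$-coordinate of $w$. Imposing $L_z(w)=L_w(z)$ in turn on the pairs $(L_a,L_b)$ with $a\neq b$, then $(L_a,I_b)$, then $(I_a,I_b)$, and finally $(C_L,w)$, and repeatedly using linear independence together with the fact that $C_L$ is not a scalar multiple of any $L_a$, one is forced to have $\psi\equiv0$ and $\chi(u)=0$ for every basis vector $u\neq I_0$, with the single remaining parameter $\chi(I_0)=:\beta$. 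Consequently $z\cdot w=\beta\,z_{I_0}\,w_{I_0}\,C_L$; in particular $I_0\cdot I_0=\beta C_L$ and all other products of basis vectors vanish, and the structure is non-trivial precisely when $\beta\neq0$.

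It remains to confirm that this multiplication really defines a transposed Poisson --- in fact a Poisson --- structure. Commutativity is immediate, and associativity holds because $x\cdot y$ is always proportional to the central vector $C_L$, whose $I_0$-coordinate is $0$, so $(x\cdot y)\cdot z=0=x\cdot(y\cdot z)$. For the compatibility identities, the key point is that the bracket relations for $\lambda=-1$ (in particular $[L_a,I_{-a}]=0$ and $[I_a,I_b]=0$) imply that $[x,y]$ always has vanishing $I_0$-coordinate; hence $2z\cdot[x,y]=2\beta\,z_{I_0}\,([x,y])_{I_0}\,C_L=0$, while $[z\cdot x,y]+[x,z\cdot y]=0$ because $z\cdot x$ and $z\cdot y$ are central, which establishes the transposed Poisson identity; the same two facts give $[x,y\cdot z]=0=[x,y]\cdot z+y\cdot[x,z]$, i.e.\ the Leibniz rule, so $(g(G,-1),\cdot,[\cdot,\cdot])$ is a genuine non-trivial Poisson algebra, as claimed. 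The step I expect to need the most care is the elimination of $\lambda=1$: although $g(G,1)$ does carry a non-trivial $\frac12$-derivation, commutativity of the associative multiplication prevents that derivation from occurring in any $L_z$, so no non-trivial structure survives; everything else reduces to bookkeeping with the linear independence of $\{L_a,I_a,C_L,C_{LI}^{(i)}\}$.
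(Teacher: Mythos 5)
Your proposal is correct and follows essentially the same route as the paper: Lemma~\ref{l1} turns each left multiplication into a $\frac12$-derivation, the classification in Lemmas~\ref{thm9}, \ref{thm(-2)} and \ref{thm(1)} kills all cases except $\lambda=\pm1$, and commutativity $L_z(w)=L_w(z)$ together with linear independence of the basis forces the multiplication to vanish for $\lambda=1$ and to reduce to $z\cdot w=\beta\,z_{I_0}w_{I_0}C_L$ for $\lambda=-1$. The only difference is one of completeness: the paper dismisses the $\lambda=-1$ case as ``trivial,'' whereas you actually carry out the elimination of the coefficients and the verification of associativity, the transposed Poisson identity, and the Leibniz rule --- these are precisely the details the paper omits, and your computations for them are correct.
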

\begin{proof} 
Thanks to Lemmas \ref{thm9}  and \ref{thm(-2)}, any $\frac{1}{2}$-derivation on $g(G, \lambda)$ for $\lambda \neq \pm 1$ is trivial, we conclude that there are no non-trivial transposed Poisson  structures defined on $g(G, \lambda),$ for $\lambda \neq \pm 1.$
The case of $\lambda =-1$ is trivial and states the Theorem for $\lambda=-1.$

Now we aim to describe the commutative associative multiplication on transposed Poisson structures defined on $g(G, 1).$
By Lemma \ref{l1}, for each $l\in g(G, 1),$ there is a related
$\frac{1}{2}$-derivation $\varphi_{l}$ of $g(G, 1)$ such that $$\varphi_{l_{1}}(l_{2})=l_{1}\cdot l_{2}=\varphi_{l_{2}}(l_{1}),$$
with $l_{1},\ l_{2}\in g(G, 1).$ By Lemma \ref{thm(1)},  we have
$$\alpha_{L_a}C_L+\alpha'_{L_a}C^{(1)}_{LI}=\varphi_{L_a}(C_{L})=L_a\cdot C_{L}=\varphi_{C_{L}}(L_a) = \alpha_{C_L}L_a+\alpha'_{C_L}I_a.$$

Thus, we get $\alpha_{L_a}=\alpha'_{L_a}=\alpha_{C_L}=\alpha'_{C_L}=0$  for all $a\in G.$ Hence, 
$$C_L\cdot C_{L}=0,\quad L_a\cdot C_{L}=0, \quad L_a\cdot L_b=0 \quad \text{for all} \quad a,b\in G.$$

Similarly, considering $\varphi_{L_a}(C^{(1)}_{LI})=L_a\cdot C^{(1)}_{LI}=\varphi_{C^{(1)}_{LI}}(L_a),$
we obtain $L_a\cdot C^{(1)}_{LI}=C^{(1)}_{LI}\cdot C^{(1)}_{LI}=0$ for all $a\in G.$ Now, considering 
$$\alpha_{I_a}C_L+\alpha'_{I_a}C^{(1)}_{LI} = I_a\cdot C_{L}=C_{L} \cdot I_a=\varphi_{I_a}(C_{L})=\varphi_{C_{L}}(I_a) = 0, $$
we get $\alpha_{I_a}=\alpha'_{I_a}=0$ for all $a\in G,$ which implies $$I_a\cdot I_b=0, \quad I_a\cdot C^{(1)}_{LI}=0, \quad I_a\cdot L_b=0 \quad \text{for all} \quad  a, b \in G.$$ 
Hence, all commutative associative multiplications on $g(G, 1)$ are trivial. 
\end{proof}

\section{Transposed Poisson  structures on not-finitely graded Lie algebra $\widehat{W}(G)$}

Let $G$ be any nontrivial additive subgroup of $\mathbb{C}$, and $\mathbb{C}[G\times  \mathbb{Z}_{+}]$
the semigroup algebra of $G\times  \mathbb{Z}_{+}$ with basis $\{x^{\alpha}t^i \ | \ \alpha \in G, i \in \mathbb{Z}_{+}\}$ and product $(x^{\alpha}t^i) \cdot (x^{\beta}t^j) = x^{\alpha+\beta}t^{i+j}.$ Let $\partial_x,$ $\partial_t$
be the derivations of  $\mathbb{C}[G\times  \mathbb{Z}_{+}]$ defined by 
$\partial_x(x^{\alpha}t^i) = \alpha x^{\alpha-1}t^i,$ $\partial_t(x^{\alpha}t^i) = i x^{\alpha}t^{i-1}$
  for  $\alpha \in G, i \in \mathbb{Z}_{+}.$ Denote 
 $\partial = \partial_x + t^2 \partial_t.$ Then the Lie algebra
$W(G)$ is $\mathbb{C}[G\times \mathbb{Z}_{+}]\partial$
 with basis $\{L_{\alpha, i} = x^{\alpha}t^i \partial \ | \ \alpha \in G, i \in \mathbb{Z}_{+}\}$ and relation 
 \begin{equation*}
     [L_{\alpha, i}, L_{\beta, j}] = (\beta - \alpha)L_{\alpha+\beta, i+j} + (j-i) L_{\alpha+\beta, i+j+1}.
 \end{equation*}


The Lie algebra $W(G)$ has the unique universal central extension $\widehat{W}(G)$
with one dimensional center $ \langle C \rangle$ and relation 
\begin{equation}\label{cab}
  [L_{\alpha, i}, L_{\beta, j}] = (\beta - \alpha)L_{\alpha+\beta, i+j} + (j-i) L_{\alpha+\beta, i+j+1}+\delta_{\alpha,-\beta}\delta_{i,-j}\frac{\alpha^3-\alpha}{12}C.  
\end{equation}
$\widehat{W}(G)$ contains the Virasoro algebra
$Vir(G)=\langle L_{\alpha, 0,},C \ | \ \alpha\in G\rangle.$
We refer to $\widehat{W}(G)$ as a not-finitely graded  Heisenberg-Virasoro algebra.
The Lie algebra  $\widehat{W}(G)$ is a $G$-graded algebra.
Namely, $\widehat{W}(G)=\bigoplus_{\alpha\in G}\widehat{W}(G)_{\alpha},$ where 
  $  \widehat{W}(G)_{0}= \langle  L_{0,i}, C \ | \ i\in \mathbb{Z}_{+}\rangle$ and $ \widehat{W}(G)_{\alpha \in G\setminus\{0\}}= \langle  L_{\alpha,i} \ | \ i\in \mathbb{Z}_{+} \rangle.$

\begin{theorem} \label{main4}
$\Delta(\widehat{W}(G)) =\langle {\rm Id}, \varphi \rangle,$ where 
$\varphi(L_{\alpha,i})=\sum\limits_{d\in G }\sum\limits_{k\geq 1}a^{d,k}L_{\alpha+d,i+k}.$
\end{theorem}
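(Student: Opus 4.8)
The plan is to compute $\Delta(\widehat{W}(G))$ by exploiting the $G$-grading: by Lemma~\ref{l01} it suffices to determine each homogeneous component $\Delta_d(\widehat{W}(G))$, so I fix $d\in G$ and a $\frac12$-derivation $\varphi_d$ of degree $d$. Since $\operatorname{Ann}(\widehat{W}(G))=\langle C\rangle$ is invariant, and $[\widehat{W}(G),\widehat{W}(G)]$ is invariant, I may write $\varphi_d(L_{\alpha,i})=\sum_{k\ge 0}a^{d,k}_{\alpha,i}L_{\alpha+d,i+k}$ (a locally finite sum, since the bracket only raises the $\mathbb{Z}_+$-degree) plus a possible $C$-term when $\alpha+d=0$, and $\varphi_d(C)$ lands in the center-plus-degree-$d$ part, hence is $0$ unless $d=0$. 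Then I substitute into the defining identity $\varphi_d([L_{\alpha,i},L_{\beta,j}])=\tfrac12([\varphi_d(L_{\alpha,i}),L_{\beta,j}]+[L_{\alpha,i},\varphi_d(L_{\beta,j})])$ using relation~\eqref{cab}, and read off recursions on the coefficients $a^{d,k}_{\alpha,i}$ by comparing the coefficient of each basis vector $L_{\alpha+\beta+d,\,i+j+k}$ and of $C$.

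The key steps, in order: first treat $d\neq 0$. Specializing $\beta=i=j=0$ (so the bracket is $(\,\beta-\alpha\,)L_{\alpha+\beta,0}+\dots$ with the $C$-term and the $(j-i)$-term vanishing) should force the leading coefficients $a^{d,0}_{\alpha,0}$ to be constant in $\alpha$ and then, via another specialization (e.g. $\beta=-\alpha=d$ or using the $(j-i)$ term with $j\neq i$), to vanish; an induction on $k$ then kills all $a^{d,k}_{\alpha,i}$ with the ``wrong'' balance, and finally the $C$-coefficients are pinned down and shown to be $0$ exactly as in the last paragraph of the proof of Lemma~\ref{MainLemma}. This yields $\Delta_{d\neq 0}(\widehat{W}(G))=0$; I expect here to need the same trick as in the $g(G,\lambda)$ case, namely choosing $\alpha$ (or $i$) so that an explicit polynomial factor like $(\alpha+d)((\alpha+d)^2-1)$ or $(j-i)$ is nonzero, which uses only that $G$ is infinite. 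Then for $d=0$: the relations now force $a^{0,0}_{\alpha,i}=a^{0,0}_{0,0}=:\lambda$ independent of $\alpha,i$ (this is the $\mathrm{Id}$ part), $\varphi_0(C)=\lambda C$ is forced, and the remaining freedom is precisely a family of coefficients $a^{d,k}$ with $k\ge 1$ that are \emph{constant} in $\alpha$ and $i$ — giving the operator $\varphi(L_{\alpha,i})=\sum_{d\in G}\sum_{k\ge 1}a^{d,k}L_{\alpha+d,i+k}$ stated in the theorem. One checks that such an operator genuinely is a $\frac12$-derivation (the shift $L_{\alpha,i}\mapsto L_{\alpha+d,i+k}$ with $k\ge1$ interacts correctly with the bracket because both the structure constant $(\beta-\alpha)$ and $(j-i)$ are unchanged under adding the fixed $(d,k)$ to both indices, and the central term is annihilated since $k\ge1$ forces $i+k\neq -j$ — wait, rather because the $C$-term in $[\varphi L_{\alpha,i},L_{\beta,j}]$ carries $\delta_{\alpha+d,-\beta}\delta_{i+k,-j}$ which is impossible for $i,j\in\mathbb{Z}_+,k\ge1$), so the family spans a genuine subspace of $\Delta$.

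The main obstacle I anticipate is bookkeeping in the $d=0$, $k\ge1$ regime: showing that every such coefficient $a^{0,k}_{\alpha,i}$ is forced to be independent of $\alpha$ \emph{and} of $i$ (so that the putative extra $\frac12$-derivation has the clean form above), and that no further compatibility among the different $a^{d,k}$ is imposed. This requires carefully isolating, for fixed total shift $(d,k)$, the contribution to the coefficient of $L_{\alpha+\beta+d,i+j+k}$ coming only from $\varphi_d$ applied to $L_{\alpha,i}$ versus to $L_{\beta,j}$, and checking that the resulting two-variable functional equation $2(\beta-\alpha)a_{\alpha+\beta}=(\beta-\alpha)(a_\alpha+a_\beta)$-type relation (together with its $(j-i)$-analogue in the $t$-variable) has only constant solutions over the group $G$ and the monoid $\mathbb{Z}_+$ — the same averaging/specialization argument used repeatedly in Lemmas~\ref{thm9}--\ref{thm(1)}, but now I must also verify the $i$-direction, where $\mathbb{Z}_+$ is not a group, so I cannot simply put $j=-i$; instead I will use $i=0$ plus induction on $j$. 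Once the functional equations are solved, assembling the answer and verifying the converse (that the stated operator lies in $\Delta$) is routine.
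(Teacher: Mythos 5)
There is a genuine error in your plan: you assert that the $d\neq 0$ analysis ``yields $\Delta_{d\neq 0}(\widehat{W}(G))=0$'', and this is false. It contradicts the very statement you are proving (whose $\varphi$ shifts the $G$-degree by arbitrary $d\in G$) and also your own second paragraph, where you correctly verify that for \emph{any} $d\in G$ and $k\geq 1$ the operator $S_{d,k}\colon L_{\alpha,i}\mapsto L_{\alpha+d,i+k}$, $C\mapsto 0$, is a $\frac 12$-derivation (the central $\delta$'s never fire because $i+k\geq 1>-j$ for $i,j\in\mathbb{Z}_+$). Since $S_{d,k}$ is homogeneous of degree $d$, the component $\Delta_{d}(\widehat{W}(G))$ is nonzero (in fact infinite-dimensional) for every $d$, so no specialization or induction can ``kill all $a^{d,k}_{\alpha,i}$'' when $d\neq 0$. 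What the functional equations coming from \eqref{cab} actually force (this is the content of the paper's appeal to Theorem~12 of \cite{kkz}) is that $a^{d,k}_{\alpha,i}=0$ for $k<i$ and $a^{d,k}_{\alpha,i}=a^{d,k-i}$ depends only on $d$ and $k-i$ for $k\geq i$; the only coefficients that vanish for $d\neq 0$ are the unshifted one $a^{d,0}$ and the central coefficient, and they die via the \emph{central} term of the bracket: relation \eqref{mainid7} with $\beta=-\alpha-d$ gives $-24(2\alpha+d)a_{-d}=a^{d,0}\bigl((\alpha+d)^3-(\alpha+d)+\alpha^3-\alpha\bigr)$, and varying $\alpha$ forces $a^{d,0}=a_{-d}=0$. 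For $d=0$ the same relation yields $a_0=0$ and $\lambda=a^{0,0}$, which is where $\mathrm{Id}$ enters. As written, your argument is also internally inconsistent at the assembly stage: if $\Delta_{d\neq 0}$ were zero, then $\Delta=\Delta_0$ and the final operator could only involve $d=0$, not the full sum over $G$.

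Apart from this, the outline follows the paper's route (grading via Lemma~\ref{l01}, the ansatz respecting $\operatorname{Ann}$ and the center, substitution into \eqref{cab}, and solving the two-variable coefficient relations — which the paper delegates to \cite{kkz} rather than redoing the induction on the $t$-degree). To repair the proof, replace the claim $\Delta_{d\neq 0}=0$ by: for every $d\in G$, the non-central part of $\varphi_d$ is $\sum_{k\geq 1}a^{d,k}S_{d,k}$, plus a multiple of $\mathrm{Id}$ exactly when $d=0$.
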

\begin{proof}

By Lemma \ref{l01}, we get 
$\Delta(\widehat{W}(G))=\bigoplus\limits_{\alpha\in G}\Delta_{\alpha}(\widehat{W}(G)).$
To determine $\Delta(\widehat{W}(G)),$ we need to calculate 
every homogeneous subspace $\Delta_{\alpha}(\widehat{W}(G)).$
Let $\varphi_d \in  \Delta_d(\widehat{W}(G)).$  
Since $\operatorname{Ann}(\widehat{W}(G))$ is invariant under any $\frac{1}{2}$-derivation, we can assume that 
$$\varphi_d(L_{\alpha,i})=\sum_{k\in\mathbb{Z}_{+}}a^{d,k}_{\alpha,i}L_{\alpha+d,k}+\delta_{\alpha+d,0}\delta_{i,0}a_{\alpha}C, \quad
\varphi_d(C)=\delta_{d,0}\lambda C.$$
To determine these coefficients, we start with applying $\varphi_d$
to both side of (\ref{cab}) 
and obtain
{\small\begin{equation}\label{maini}
\begin{array}{cl}
2\sum\limits_{k\in\mathbb{Z}} & \left((\beta-\alpha)a_{\alpha+\beta,i+j}^{d,k}+(j-i)a_{\alpha+\beta,i+j+1}^{d,k}\right)  L_{\alpha+\beta+d,k}=\\[2mm]
&\multicolumn{1}{r}{=\sum\limits_{k\in\mathbb{Z}}a_{\alpha,i}^{d,k}\left((\beta-\alpha-d)L_{\alpha+\beta+d,j+k}+(j-k)L_{\alpha+\beta+d,j+k+1}\right)+}\\[2mm]
&\multicolumn{1}{r}{+\sum\limits_{k\in\mathbb{Z}}a_{\beta,j}^{d,k}\left((\beta-\alpha+d)L_{\alpha+\beta+d,i+k}+(k-i)L_{\alpha+\beta+d,i+k+1}\right).}
\end{array}
\end{equation}
\begin{equation}\label{mainid6}
\begin{array}{ccll}
&2(\beta-\alpha)\delta_{\alpha+\beta,-d}\delta_{i,-j}a_{\alpha+\beta}C+2(j-i)\delta_{\alpha+\beta,-d}\delta_{i+j,-1}a_{\alpha+\beta}C+
2\delta_{\alpha,-\beta}\delta_{i,-j}\delta_{d,0}\frac{\alpha^3-\alpha}{12}\lambda C\\[2mm]
&=\sum\limits_{k\in\mathbb{Z}_{+}}a^{d,k}_{\alpha,i}\delta_{\alpha+\beta,-d}\delta_{j,-k}\frac{(\alpha+d)^3-(\alpha+d)}{12}C+\sum\limits_{k\in\mathbb{Z}_{+}}a^{d,k}_{\beta,j}\delta_{\alpha+\beta,-d}\delta_{i,-k}\frac{\alpha^3-\alpha}{12}C.
\end{array}
\end{equation}}

The relation (\ref{maini}) corresponds to the case $n=1$ in the proof of  Theorem 12    in \cite{kkz}. It follows that 
$a_{\alpha,i}^{d,k}=a_{0,0}^{d,k-i},$ $k\geq i.$
We simply denote $a^{d,k}=a_{0,0}^{d,k}.$ Setting $i=j=0$ in (\ref{mainid6}), we deduce the relation
\begin{equation}\label{mainid7}
 24(\beta-\alpha)\delta_{\alpha+\beta,-d}a_{\alpha+\beta}
+2\delta_{\alpha,-\beta}\delta_{d,0}{(\alpha^3-\alpha)}\lambda =
 a^{d,0}\delta_{\alpha+\beta,-d}\left({(\alpha+d)^3-(\alpha+d)}+
{\alpha^3-\alpha}\right).
\end{equation}

For $d\neq 0$ putting $\alpha=-d$ and $\beta=0$ in (\ref{mainid7}), we have 
\begin{center}
$-24(2\alpha+d)a_{-d} =
 a^{d,0}\left((\alpha+d)^3-(\alpha+d)+
\alpha^3-\alpha\right).$
\end{center}

This equality, due to arbitrary $\alpha$, gives that
$a^{d,0}=a_{-d}=0$ for all $d\in G\setminus\{0\}.$

For $d=0$ setting $\beta=-\alpha$ in (\ref{mainid7}), we obtain  the relation
$
  24\alpha a_0=( \lambda - a^{0,0} )(\alpha^2-\alpha).   
$
Again, due to arbitrary $\alpha$, we get $a_0=0$ and $\lambda=a^{0,0}.$
Hence it proves that $\varphi_d$ has the form 
$$\begin{array}{rclrcl} 
\varphi_{d\neq 0}(L_{\alpha,i})&=&\sum\limits_{k\geq i+1}a^{d,k-i}L_{\alpha+d,k}, \\
\varphi_0(L_{\alpha,i})&=&\sum\limits_{ k\geq i}a^{0,k-i}L_{\alpha,k}, & \varphi_0(C)&=&a^{0,0}C.
\end{array}$$

\end{proof}

\begin{theorem} Let $(\widehat{W}(G),\cdot,[\cdot,\cdot])$ be a transposed Poisson algebra structure defined on the Lie algebra $\widehat{W}(G).$ Then $(\widehat{W}(G),\cdot,[\cdot,\cdot])$ is not Poisson algebra and 
there is a finite set of elements $\{ a^{d,k}| d \in G, k \in \mathbb{Z}_+\},$ such that  $$L_{\alpha,i}\cdot L_{\beta,j}=\sum_{d\in G}\sum_{k\in\mathbb{Z}_{+}}a ^{d,k}L_{d+\alpha+\beta,k+i+j+1}.$$
 
\end{theorem}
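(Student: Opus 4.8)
The plan is to apply Lemma~\ref{l1}: every transposed Poisson structure $\cdot$ on $\widehat{W}(G)$ is built from $\frac{1}{2}$-derivations $\varphi_{l}$, where $\varphi_{l_1}(l_2)=l_1\cdot l_2=\varphi_{l_2}(l_1)$ for all $l_1,l_2$. By Theorem~\ref{main4}, every $\frac{1}{2}$-derivation is a scalar multiple of the identity plus an operator of the form $L_{\alpha,i}\mapsto\sum_{d\in G}\sum_{k\geq 1}a^{d,k}L_{\alpha+d,i+k}$; in particular every $\frac{1}{2}$-derivation kills $C$ up to a scalar and raises the $t$-degree $i$ strictly whenever the ``shift part'' is nonzero. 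First I would pin down $L_{\alpha,i}\cdot C$: since $C\in\operatorname{Ann}(\widehat W(G))$ and $C\in\widehat W(G)_0$, the $\frac 12$-derivation $\varphi_C$ has degree $0$, so $\varphi_C(L_{\alpha,i})=\sum_{k\geq i}c^{k-i}L_{\alpha,k}+(\text{scalar})C$ only for $\alpha=0$; matching $\varphi_{L_{\alpha,i}}(C)=\varphi_C(L_{\alpha,i})$ with $\varphi_{L_{\alpha,i}}(C)\in\langle C\rangle$ forces $L_{\alpha,i}\cdot C$ to be a scalar multiple of $C$ for $\alpha=0$ and $0$ otherwise, and then the associativity/commutativity together with $C$ being central (so $C\cdot C$ must also be handled) will force $C\cdot\widehat W(G)=0$; this last point is where one uses that $\widehat W(G)=[\widehat W(G),\widehat W(G)]$, so $C$ cannot pair nontrivially with anything under $\cdot$ without violating the compatibility identity $2z\cdot[x,y]=[z\cdot x,y]+[x,z\cdot y]$.

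Next I would compute $L_{\alpha,i}\cdot L_{\beta,j}$ in general. Writing $\varphi_{L_{\alpha,i}}=\lambda_{\alpha,i}\,\mathrm{Id}+\psi_{\alpha,i}$ with $\psi_{\alpha,i}$ purely shift-raising, the symmetry $\varphi_{L_{\alpha,i}}(L_{\beta,j})=\varphi_{L_{\beta,j}}(L_{\alpha,i})$ gives $\lambda_{\alpha,i}L_{\beta,j}+\psi_{\alpha,i}(L_{\beta,j})=\lambda_{\beta,j}L_{\alpha,i}+\psi_{\beta,j}(L_{\alpha,i})$. Comparing the ``lowest'' components (those of minimal $t$-degree, namely $i+j$ on one side versus... ) one sees the $\mathrm{Id}$-parts can only survive if $\lambda_{\alpha,i}=0$ for all $(\alpha,i)$ except possibly in degenerate positions, and a short argument using the grading (the coefficient $a^{d,k}$ in $\psi_{\alpha,i}$ is independent of $(\alpha,i)$ by Theorem~\ref{main4}) shows $\lambda_{\alpha,i}\equiv 0$; hence $L_{\alpha,i}\cdot L_{\beta,j}$ has no $L_{\alpha+\beta,i+j}$-term and takes the form $\sum_{d,k}b^{d,k}_{\alpha,i,\beta,j}L_{\alpha+\beta+d,i+j+k+1}$. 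Then I would feed this back into the compatibility identity $2L_{\gamma,m}\cdot[L_{\alpha,i},L_{\beta,j}]=[L_{\gamma,m}\cdot L_{\alpha,i},L_{\beta,j}]+[L_{\alpha,i},L_{\gamma,m}\cdot L_{\beta,j}]$ and into associativity, to show the coefficients $b^{d,k}$ are actually independent of $\alpha,i,\beta,j$, yielding the claimed uniform formula $L_{\alpha,i}\cdot L_{\beta,j}=\sum_{d\in G}\sum_{k\in\mathbb Z_+}a^{d,k}L_{d+\alpha+\beta,k+i+j+1}$ with finitely many nonzero $a^{d,k}$ (finiteness because $\varphi_{L_{0,0}}$, being a single linear map into the graded algebra with values summable over $G\times\mathbb Z_+$, has finite support).

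To see $(\widehat W(G),\cdot,[\cdot,\cdot])$ is \emph{not} a Poisson algebra, I would check the Poisson/Leibniz identity $[x\cdot y,z]=x\cdot[y,z]+[x,z]\cdot y$ fails: take $x=y=L_{0,0}$, $z=L_{\beta,j}$. The left side is $[L_{0,0}\cdot L_{0,0},L_{\beta,j}]=[\sum a^{d,k}L_{d,k+1},L_{\beta,j}]$, which generically has a nonzero $L_{\beta+d,\ast}$-component with a \emph{nonzero} structure coefficient, while the right side is $2L_{0,0}\cdot[L_{0,0},L_{\beta,j}]=2L_{0,0}\cdot(\beta L_{\beta,j}+jL_{\beta,j+1}+\cdots)$; comparing the coefficient of the lowest $t$-degree term one finds the two sides differ (essentially because the Lie bracket $[L_{0,0},-]$ and left-$\cdot$-multiplication by $L_{0,0}$ interact through incompatible scalar factors, e.g. the factor $\beta-0=\beta$ on the Lie side versus the weight coming from $\cdot$), so no nontrivial such structure can be Poisson; for the trivial structure $\cdot\equiv 0$ the algebra is excluded by the requirement that both multiplications be nonzero in the definition, so there is nothing to check there, and one only asserts non-Poisson for the genuinely nontrivial ones.

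The main obstacle I expect is the bookkeeping in the second paragraph: proving that the coefficients $b^{d,k}_{\alpha,i,\beta,j}$ are genuinely independent of all four indices. The $\frac 12$-derivation structure from Theorem~\ref{main4} already gives independence in the pair attached to the ``acting'' element, but turning the symmetry and the compatibility identity into full independence — while simultaneously tracking the $C$-contributions from \eqref{mainid6}-type terms and confirming no central term sneaks into $L_{\alpha,i}\cdot L_{\beta,j}$ — requires carefully choosing enough specializations of $\alpha,\beta,i,j,\gamma,m$. This is routine but delicate, and it is the step that occupies the bulk of the argument; the non-Poisson claim, by contrast, is a one-line coefficient comparison once the multiplication formula is in hand.
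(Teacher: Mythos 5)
Your proposal follows essentially the same route as the paper: invoke Lemma~\ref{l1} to realize every left multiplication as a $\frac12$-derivation, use Theorem~\ref{main4} for its shape, and exploit the symmetry $\varphi_{l_1}(l_2)=\varphi_{l_2}(l_1)$. Two of your steps are heavier than they need to be. First, the single matching $\varphi_{L_{\alpha,i}}(C)=\varphi_C(L_{\alpha,i})$ already finishes the $C$-analysis: the left side lies in $\langle C\rangle$ (it equals $a^{0,0}_{L_{\alpha,i}}C$, since the shift part of any $\frac12$-derivation kills $C$) while the right side lies in $\mathrm{span}\{L_{\gamma,k}\}$, so both vanish; this simultaneously gives $C\cdot\widehat{W}(G)=0$ \emph{and} $\lambda_{\alpha,i}=a^{0,0}_{L_{\alpha,i}}=0$ for every $\alpha,i$ --- there is no need for your intermediate claim about $\alpha=0$ versus $\alpha\neq0$ (which is not quite right as stated, and your assertion that $\varphi_C$ has degree $0$ is neither justified nor needed), nor for the lowest-degree comparison you sketch to kill the $\mathrm{Id}$-parts. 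Second, the independence of the coefficients from $(\alpha,i,\beta,j)$ does not require the compatibility identity or associativity: the one relation $\varphi_{L_{0,0}}(L_{\alpha,i})=\varphi_{L_{\alpha,i}}(L_{0,0})$ yields $a^{d,k}_{L_{\alpha,i}}=a^{d-\alpha,k-i}_{L_{0,0}}$ for $k\geq i+1$ and $a^{d,k}_{L_{\alpha,i}}=0$ for $1\leq k\leq i$, which is exactly the claimed uniform formula; your parenthetical attributing this independence to Theorem~\ref{main4} conflates the coefficients of a fixed derivation with their dependence on the multiplying element. Your non-Poisson check by direct coefficient comparison in the Leibniz identity is a legitimate (if more explicit) substitute for the paper's appeal to $\widehat{W}(G)\cdot[\widehat{W}(G),\widehat{W}(G)]\neq 0$.
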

\begin{proof} We aim describe the multiplication $\cdot.$ By Lemma \ref{l1}, for every element  $l\in \widehat{W}(G),$ there is a related
$\frac{1}{2}$-derivation $\varphi_{l}$ of $\widehat{W}(G)$ such that $\varphi_{l_{1}}(l_{2})=l_{1}\cdot l_{2}=\varphi_{l_{2}}(l_{1}),$
with $l_{1}, l_{2}\in \widehat{W}(G).$ By Theorem \ref{main4}, we have the description of all $\frac 12$-derivations of $\widehat{W}(G)$.

Now we consider  the multiplication $C\cdot L_{\alpha,i}.$ Then 
\begin{longtable}{rcl}
$\varphi_{L_{\alpha,i}}(C)=L_{\alpha,i} \cdot C $&$= $&$C\cdot L_{\alpha,i}=\varphi_{C}(L_{\alpha,i}),$\\
$a_{L_{\alpha,i}}^{0,0}C$&$
=$&$\sum\limits_{d\in G}\sum\limits_{k\geq 1}a_C^{d,k}L_{\alpha+d,k+i}+a_C^{0,0}L_{\alpha,k}.$
\end{longtable}

Thus, we get $a_{L_{\alpha,i}}^{0,0}=0$ and $a_C^{d,k}=0,$
i.e., $C\cdot L_{\alpha,i}=0.$

Then we determine  the multiplication $L_{0,0}\cdot L_{\alpha,i}$ for all $i\in \mathbb{Z}_{+}$ and obtain
$$\varphi_{L_{0,0}}(L_{\alpha,i})=L_{0,0}\cdot L_{\alpha,i}=L_{\alpha,i} \cdot L_{0,0}=\varphi_{L_{\alpha,i}}(L_{0,0}),$$
$$\sum_{d\in G}\sum_{k\geq 1}a_{L_{0,0}}^{d,k}L_{\alpha+d,k+i}
=\sum_{d\in G}\sum_{k\geq 1}a_{L_{\alpha,i}}^{d,k}L_{d,k}.$$

Hence, for all $\alpha, d\in G,$ we get
\begin{equation}\label{p4}
a_{L_{\alpha,i}}^{d,k}=0, \quad 1\leq k\leq i; \quad
a_{L_{\alpha,i}}^{d,k}=a_{L_{0,0}}^{d-\alpha,k-i}, \quad  k\geq 
i+1.
\end{equation}

Next we determine the multiplication $L_{\alpha,i}\cdot L_{\beta,j}$ for all $\alpha,\beta\in G,$ $i,j\in \mathbb{Z}_{+}$ and obtain
$$L_{\alpha,i}\cdot L_{\beta,j}= \varphi_{L_{\alpha,i}}(L_{\beta,j})=
\sum_{d\in G}\sum_{k\geq 1}a_{L_{\alpha,i}}^{d,k}L_{\beta+d,j+k}
=\sum_{d\in G}\sum_{k\geq i+1}a_{L_{0,0}}^{d-\alpha,k-i}L_{\beta+d,j+k}.$$

Hence, by (\ref{p4}) we get 
 $$L_{\alpha,i}\cdot L_{\beta,j}=\sum_{d\in G}\sum_{k\in\mathbb{Z}_{+}}a_{L_{0,0}}^{d,k}L_{d+\alpha+\beta,k+i+j+1}.$$

It is easy to see that $\cdot$  is associative.
The product $\cdot$
gives a transposed Poisson algebra structure defined on the Lie algebra $\widehat{W}(G).$  It is easy to see that $\widehat{W}(G)\cdot [\widehat{W}(G),\widehat{W}(G)]\neq 0$ and  $(\widehat{W}(G),\cdot,[\cdot,\cdot])$ is a non-Poisson algebra.
It gives the complete statement of the theorem.
\end{proof}
 
\section{Transposed Poisson structures on not-finitely graded Lie algebra $\widetilde{W}(G)$}

In this section, we consider a central extension
$\widetilde{W}(G)$ of the algebra $W(G)$ which was  studied in \cite{su}. The algebra
$W(G)$ is the Lie algebra
 with basis $\{L_{\alpha, i}  \ | \ \alpha \in G, i \in \mathbb{Z}\}$ and the multiplication table
 \begin{equation*}
     [L_{\alpha, i}, L_{\beta, j}] = (\beta-\alpha)L_{\alpha+\beta, i+j} + (j-i) L_{\alpha+\beta, i+j-1}.
 \end{equation*}
 
 $\widetilde{W}(G)$ is the Lie algebra with basis  $\{L_{\alpha, i} , C \ | \ \alpha \in G, i \in \mathbb{Z}\}$ and the following multiplication table:
\begin{equation}\label{ab2}
\begin{array}{lcl}
      [L_{\alpha, i}, L_{\beta, j}] &= &(\beta-\alpha)L_{\alpha+\beta, i+j} + (j-i) L_{\alpha+\beta, i+j-1} +\\[2mm]
  &   & +\delta_{\alpha,-\beta}(\delta_{i+j,-1}\alpha^3+3i\delta_{i+j,0}\alpha^2 +3i(i-1)\delta_{i+j,1}\alpha+i(i-1)(i-2)\delta_{i+j,2})C.
\end{array}
 \end{equation}


\noindent $\widetilde{W}(G)$ is $G$-graded:
$\widetilde{W}(G)=\bigoplus\limits_{\alpha\in G}\widetilde{W}(G)_{\alpha},$ where
$\widetilde{W}(G)_{0}=\langle L_{0,i}, \ C \rangle_{i\in \mathbb{Z}}$ 
and $\widetilde{W}(G)_{\alpha\neq 0}=\langle L_{\alpha,i} \rangle_{i\in \mathbb{Z}}.$

\begin{theorem} \label{main5}
  $\Delta(\widetilde{W}(G)) =\langle {\rm Id } \rangle.$  \end{theorem}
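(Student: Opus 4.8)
The plan is to mimic the structure used for $\widehat{W}(G)$ in Theorem~\ref{main4}, but now exploit the $\Z$-grading in the second index. By Lemma~\ref{l01}, $\Delta(\widetilde{W}(G))=\bigoplus_{d\in G}\Delta_d(\widetilde{W}(G))$, so fix $d\in G$ and $\varphi_d\in\Delta_d(\widetilde{W}(G))$. Since $\operatorname{Ann}(\widetilde{W}(G))=\langle C\rangle$ is invariant under $\varphi_d$, write
\begin{equation*}
\varphi_d(L_{\alpha,i})=\sum_{k\in\Z}a^{d,k}_{\alpha,i}L_{\alpha+d,k}+\delta_{\alpha+d,0}\,b^{d}_{\alpha,i}\,C,\qquad \varphi_d(C)=\delta_{d,0}\lambda C,
\end{equation*}
where for each $(\alpha,i)$ only finitely many $a^{d,k}_{\alpha,i}$ are nonzero. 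Applying $\varphi_d$ to both sides of \eqref{ab2} and comparing the $L$-components gives, for every $\alpha,\beta,d\in G$ and $i,j\in\Z$, a recursion of the shape
\begin{equation*}
2(\beta-\alpha)a^{d,k}_{\alpha+\beta,i+j}+2(j-i)a^{d,k}_{\alpha+\beta,i+j-1}
=\sum_{\text{shifts}}a^{d,\bullet}_{\alpha,i}(\cdots)+\sum_{\text{shifts}}a^{d,\bullet}_{\beta,j}(\cdots),
\end{equation*}
which is exactly the $\widetilde{W}(G)$-analogue of \eqref{maini}; the $C$-components give a separate scalar system analogous to \eqref{mainid6}.

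First I would analyze the $L$-component recursion to pin down the $a^{d,k}_{\alpha,i}$. Setting $\beta=0$, $j=0$ (so $L_{0,0}$ is a Witt-type generator that acts semisimply) should force $a^{d,k}_{\alpha,i}$ to depend only on $\alpha$ and on $k-i$, i.e. $a^{d,k}_{\alpha,i}=a^{d,k-i}_{\alpha,0}$ for $k$ in the appropriate range, and the sign difference between \eqref{cab} and \eqref{ab2} (the index shift is $i+j-1$ rather than $i+j+1$) will reverse the direction of the ``tail'': here the surviving coefficients should satisfy $a^{d,k}_{\alpha,i}=0$ for $k>i$ rather than for $k<i$. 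The key extra input, which makes this case collapse to just $\langle\mathrm{Id}\rangle$ unlike $\widehat{W}(G)$, is that $\Z$ (the second-index group) is \emph{ungraded from below as well}: since $i$ ranges over all of $\Z$ and $\varphi_d$ must send $L_{\alpha,i}$ to a \emph{finite} sum, the boundedness of the tail in \emph{both} directions as $i\to\pm\infty$ forces $a^{d,k-i}$ to be independent of $i$ only if it is supported at $k=i$; concretely, I would run the recursion with $j$ large and with $j$ very negative to squeeze the support of $k-i$ down to $\{0\}$, killing all ``genuine'' shifts and leaving $\varphi_d(L_{\alpha,i})=a^{d}_\alpha L_{\alpha+d,i}+(\text{center})$. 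Then the usual Witt/Virasoro argument (exactly the computation behind the $W(G)$ and $Vir(G)$ results cited via \cite{kkz}) forces $d=0$, $a^0_\alpha$ constant, and $\lambda$ equal to that constant, giving $\varphi_0\in\langle\mathrm{Id}\rangle$.

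Next I would handle the central contributions: substituting the now-known form of the $a$'s into the $C$-component equation \eqref{ab2}-analogue (with the four Kronecker terms $\delta_{i+j,-1},\delta_{i+j,0},\delta_{i+j,1},\delta_{i+j,2}$) and varying $\alpha$ and $i+j$ over the four relevant values should force all the $b^{d}_{\alpha,i}$ to vanish and confirm $\lambda=a^{0,0}$; the polynomial identities in $\alpha$ (degrees up to $3$) have enough distinct evaluation points since $G$ is infinite, so each coefficient is forced to zero. The main obstacle I anticipate is the bookkeeping in the two-sided squeezing step: one must be careful that the recursion coming from \eqref{ab2} genuinely propagates information both upward and downward in the second index (the presence of the $(j-i)L_{\alpha+\beta,i+j-1}$ term is what couples neighboring degrees), and that the finite-support hypothesis on $\varphi_d$ is invoked correctly rather than assumed away — this is precisely the point where $\widetilde{W}(G)$ differs from $\widehat{W}(G)$, whose second index lives in $\Z_+$ and therefore admits the nontrivial ``shift-up'' $\frac12$-derivation $\varphi(L_{\alpha,i})=\sum_{d,k\geq1}a^{d,k}L_{\alpha+d,i+k}$. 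Once the squeezing is done cleanly, the remainder is the routine Virasoro computation already packaged in \cite{kkz}, so I would cite it rather than redo it, exactly as the authors do for \eqref{maini}.
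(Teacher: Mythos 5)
Your setup (the $G$-graded decomposition, the ansatz for $\varphi_d$, and the reduction of the $L$-component equations to $a^{d,k}_{\alpha,i}=a^{d,k-i}_{0,0}$) matches the paper, but the step on which your whole proof turns — eliminating the nonzero shifts $a^{d,s}$, $s\neq 0$ — rests on an argument that does not work. You claim that because $i$ ranges over all of $\mathbb{Z}$ and each $\varphi_d(L_{\alpha,i})$ must be a finite sum, a ``two-sided squeezing'' forces the support of $s\mapsto a^{d,s}$ to be $\{0\}$. It does not: if $S\subset\mathbb{Z}$ is any finite set and $a^{d,s}$ is supported on $S$, then $\varphi_d(L_{\alpha,i})=\sum_{s\in S}a^{d,s}L_{\alpha+d,i+s}$ is a finite sum for every $i$, so finiteness imposes no constraint at all. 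Worse, a direct check shows that for the \emph{centerless} algebra $W(G)$ with $i\in\mathbb{Z}$ every single shift $L_{\alpha,i}\mapsto L_{\alpha+d,i+s}$ (any $d\in G$, any $s\in\mathbb{Z}$) is an honest $\frac12$-derivation — the bracket $[L_{\alpha,i},L_{\beta,j}]=(\beta-\alpha)L_{\alpha+\beta,i+j}+(j-i)L_{\alpha+\beta,i+j-1}$ is compatible with arbitrary translations in both indices. So no amount of manipulation of the $L$-component recursion (your analogue of \eqref{maini}) can kill these maps, and your auxiliary claim that the tail direction ``reverses'' to $k\le i$ is likewise not forced. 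In $\widehat{W}(G)$ the truncation $k\ge i$ comes only from the absence of basis vectors with negative second index, not from the sign in the bracket.

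What actually kills the shifts in $\widetilde{W}(G)$ is the part of your argument you relegated to a final bookkeeping step: the equations obtained by comparing coefficients of $C$, i.e.\ the analogue of \eqref{maini6}. The cocycle in \eqref{ab2} is supported on $i+j\in\{-1,0,1,2\}$ and, crucially, its coefficients $\delta_{i+j,-1}\alpha^3+3i\delta_{i+j,0}\alpha^2+3i(i-1)\delta_{i+j,1}\alpha+i(i-1)(i-2)\delta_{i+j,2}$ depend on $i$ itself and not just on $i+j$; varying $\alpha$, $i$, $j$ in these scalar identities is what the paper uses to derive $a^{d,s}=0$ for all $s\neq0$, then $a^{d,0}=0$ for $d\neq0$, $\lambda=a^{0,0}$, and the vanishing of the $C$-coefficients. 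This is exactly the structural contrast with $\widehat{W}(G)$: there the cocycle $\delta_{i,-j}$ with $i,j\in\mathbb{Z}_+$ only constrains the $i=j=0$ layer, so the positive shifts escape it and survive. To repair your proof you must carry out the cocycle analysis as the main engine of the argument (as in the derivation of the paper's \eqref{p9}--\eqref{p14} and \eqref{p5}--\eqref{p8}), not as a consistency check after the fact.
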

\begin{proof}
 By Lemma \ref{l01}, we have 
$\Delta(\widetilde{W}(G))=\bigoplus\limits_{\alpha\in G}\Delta_{\alpha}(\widetilde{W}(G)).$
To determine $\Delta(\widetilde{W}(G)),$ we need to calculate 
every homogeneous subspace $\Delta_{\alpha}(\widetilde{W}(G)).$
Let $\varphi_d \in \Delta_{d}(\widetilde{W}(G)).$ 
It is known that  $\operatorname{Ann}(\widetilde{W}(G))$ is invariant under any $\frac{1}{2}$-derivation. So  we can assume that 
\begin{center}$\varphi_d(L_{\alpha,i})=\sum\limits_{k\in\mathbb{Z}}a^{d,k}_{\alpha,i}L_{\alpha+d,k}+\delta_{\alpha+d,0}a_{\alpha,i}C, \quad \varphi_d(C)=\delta_{d,0}\lambda C.$
\end{center}

To determine these coefficients, we start with applying $\varphi_d$
to both side of (\ref{ab2}) and obtain
{\small \begin{equation}\label{main}
\begin{array}{rcll}
2\sum\limits_{k\in\mathbb{Z}}\left((\beta-\alpha)a_{\alpha+\beta,i+j}^{d,k}+(j-i)a_{\alpha+\beta,i+j-1}^{d,k}\right)L_{\alpha+\beta+d,k}&=&\\
\sum\limits_{k\in\mathbb{Z}}a_{\alpha,i}^{d,k}\big((\beta-\alpha-d)L_{\alpha+\beta+d,j+k}&+&(j-k)L_{\alpha+\beta+d,j+k-1}\big)+\\ 
\sum\limits_{k\in\mathbb{Z}}a_{\beta,j}^{d,k}\big((\beta-\alpha+d)L_{\alpha+\beta+d,i+k}&+&(k-i)L_{\alpha+\beta+d,i+k-1}\big).
\end{array}
\end{equation}
\begin{equation}\label{maini6}
\begin{array}{rcll}
\multicolumn{1}{l}{2\delta_{\alpha+\beta,-d}\big((\beta-\alpha)a_{\alpha+\beta,i+j}
+(j-i)a_{\alpha+\beta,i+j-1}\big) \ +}\\
\multicolumn{1}{r}{+ 2\delta_{\alpha,-\beta}\delta_{d,0}\big(\delta_{i+j,-1}\alpha^3+3i\delta_{i+j,0}\alpha^2+3i(i-1)\delta_{i+j,1}\alpha+i(i-1)(i-2)\delta_{i+j,2}\big)\lambda=}\\[2mm]
\multicolumn{1}{l}{\delta_{\alpha+\beta,-d} \Big(\sum\limits_{k\in\mathbb{Z}}a_{\alpha,i}^{d,k}\big(\delta_{j+k,-1}(\alpha+d)^3+3k\delta_{j+k,0}(\alpha+d)^2+3k(k-1)\delta_{j+k,1}(\alpha+d)+k(k-1)(k-2)\delta_{j+k,2}\big)+}\\
\multicolumn{1}{r}{+\sum\limits_{k\in\mathbb{Z}}a^{d,k}_{\beta,j}\big(\delta_{i+k,-1}\alpha^3+3i\delta_{i+k,0}\alpha^2
+3i(i-1)\delta_{i+k,1}\alpha+i(i-1)(i-2)\delta_{i+k,2}\big)\Big).}
\end{array}
\end{equation}}

\noindent Similarly to the previous case by (\ref{main}), we can get 
$a_{\alpha,i}^{d,k}=a_{0,0}^{d,k-i}$ for $k,i\in\mathbb{Z}. $
For $d\neq 0$ putting $\alpha=-d$, $\beta =i=j=0$  in (\ref{maini6}), we have 
$a_{-d,0} =-\frac{d^2}{2}a^{d,-1}.$
Then
setting $\beta=-\alpha-d$ in (\ref{maini6}) with $i=j=0$, we get $\alpha(d+\alpha)(d+2\alpha)a^{d,-1}=0.$  Choosing $\alpha\notin \{-\frac{d}{2};-d;0\},$ we obtain
$a^{d,-1}=0.$ Hence, $a_{-d,0}=0$ for all $d\in G\setminus\{0\}.$

\noindent Next putting  $\beta=-\alpha-d,$ $i=0$ (\ref{maini6}) for 
$d\neq 0$ it gives
\begin{equation*} 
  -2(2\alpha+d) a_{-d,j}+2ja_{-d,j-1}=
\end{equation*}
  \begin{equation} ((\alpha+d)^3 +\alpha^3)a^{d,-j-1}-3j(\alpha+d)^2 a^{d,-j}+3j(j-1)(\alpha+d)a^{d,1-j} -j(j-1)(j-2)a^{d,2-j}.\label{p9}
\end{equation}
For $d\neq 0$ letting $\alpha=-d,$ $\beta=j=0,$ and 
$\alpha=-d,$ $\beta=i=0,$  in (\ref{maini6}), we can get
\begin{equation}\label{p10}
  2d a_{-d,i}-2ia_{-d,i-1}=-d^3 a^{d,-i-1}+3id^2 a^{d,-i}-3i(i-1)da^{d,1-i} +i(i-1)(i-2)a^{d,2-i}
\end{equation}
\begin{equation}\label{p11}
  2d a_{-d,j}+2ja_{-d,j-1}=-d^3 a^{d,-j-1}-j(j-1)(j-2)a^{d,2-j}
\end{equation}
Combining (\ref{p9}) and (\ref{p10}) it gives
\begin{equation}\label{p12}
4 a_{-d,i}=-(2\alpha^2+3\alpha d+3d^2) a^{d,-i-1}+3i(\alpha+2d) a^{d,-i}-3i(i-1)a^{d,1-i}
\end{equation}
for $\alpha\neq0.$ Similarly, from (\ref{p10}) and (\ref{p11}), we obtain
\begin{equation}\label{p13}
4 a_{-d,i}=-2d^2 a^{d,-i-1}+3id a^{d,-i}-3i(i-1)a^{d,1-i}.
\end{equation}
Then subtracting (\ref{p12}) from (\ref{p13}), we deduce this equality
\begin{equation}\label{p14}
(\alpha+d)((2\alpha+d)a^{d,-i-1}-3ia^{d,-i})=0.
\end{equation}
This equality, due to arbitrary $\alpha$, gives that
$a^{d,i}=0$ for all $i\in \mathbb{Z}\setminus\{0\}.$ Putting 
$i=-1$ in (\ref{p14}), we have $a^{d,0}=0$ and from (\ref{p13}), we deduce 
$a_{-d,i}=0$ for all $i\in \mathbb{Z}.$ 

For $d=0$ setting $\beta=-\alpha,$  $i=j=0$ in (\ref{maini6}), we obtain  the relation
$2a_{0,0}=-\alpha^2a^{0,-1}. $
Due to arbitrary $\alpha$, we get $a^{0,-1}=a_{0,0}=0.$
For $d=0$ and $j\neq 0$ putting $\alpha=\beta=i=0$ in (\ref{maini6}), we can get 
\begin{equation}\label{p5}
 2a_{0,j-1}=-(j-1)(j-2)a^{0,2-j}.   
\end{equation}
Setting $\beta=-\alpha,$ $i=0$ in (\ref{maini6}) with (\ref{p5}), we obtain
\begin{equation}\label{p6}
 -4\alpha a_{0,j}+2\delta_{j,-1}\alpha^3\lambda =2\alpha^3a^{0,-j-1}-3j\alpha^2a^{0,-j}+3j(j-1)\alpha a^{0,1-j}.   
\end{equation}
Now we consider the case of $j=-1$ in (\ref{p6}), then we deduce 
\begin{equation}\label{p7}
 2\alpha^2(\lambda-a^{0,0}) =4a_{0,-1}+3\alpha a^{0,1}+6a^{0,2}   
\end{equation}
for $\alpha\neq 0.$ Replacing $\alpha$ with $-\alpha$ in (\ref{p7}), then combining them we have  
$$a^{0,1}=0,\quad \lambda=a^{0,0}, \quad a_{0,-1}=-\frac{3}{2}a^{0,2}.$$

\noindent Next we consider the case of $j\neq -1$ in (\ref{p6}) with together (\ref{p5}) it gives 
\begin{equation}\label{p8}
 j(j-1)a^{0,1-j} =3j\alpha a^{0,-j}-2\alpha^2 a^{0,-j-1}.   
\end{equation}
Similarly, as described above, replacing $\alpha$ with $-\alpha$ in (\ref{p8}) and subtracting them we obtain 
$a^{0,j}=0$ for $j\neq 0.$ So from (\ref{p5}), we deduce $a_{0,j}=0$ for all $j\in \mathbb{Z}.$
As a conclusion, we obtain $\varphi_0=\alpha^{0,0}\textrm{Id}.$
\end{proof}

Thanks to Lemma \ref{lempr}, we have the following statement.

\begin{theorem} There are no non-trivial transposed Poisson  structures defined on $\widetilde{W}(G).$
\end{theorem}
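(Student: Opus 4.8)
The statement to prove is: \emph{There are no non-trivial transposed Poisson structures defined on $\widetilde{W}(G)$.}

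\medskip

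The plan is to invoke the machinery already assembled in the excerpt, so the proof should be extremely short. By Theorem~\ref{main5} we know that $\Delta(\widetilde{W}(G)) = \langle \mathrm{Id}\rangle$; that is, the Lie algebra $\widetilde{W}(G)$ has no non-trivial $\frac{1}{2}$-derivations (every $\frac{1}{2}$-derivation is a scalar multiple of the identity, i.e.\ trivial). On the other hand, Lemma~\ref{lempr}, quoted from \cite{FKL}, states precisely that a Lie algebra without non-trivial $\frac{1}{2}$-derivations admits only the trivial transposed Poisson structure. Combining these two facts immediately yields the claim.

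\medskip

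So, concretely, I would write: \emph{By Theorem~\ref{main5}, $\Delta(\widetilde{W}(G)) = \langle \mathrm{Id}\rangle$, so $\widetilde{W}(G)$ has no non-trivial $\frac{1}{2}$-derivations. Hence, by Lemma~\ref{lempr}, every transposed Poisson structure on $\widetilde{W}(G)$ is trivial.} This is already essentially what the one-line remark ``Thanks to Lemma~\ref{lempr}, we have the following statement'' in the excerpt is pointing at. There is no real obstacle here: all the work has been done in the proof of Theorem~\ref{main5}, where the vanishing of every homogeneous component $\Delta_d(\widetilde{W}(G))$ for $d \neq 0$ and the reduction of $\varphi_0$ to a scalar multiple of the identity were established through the careful case analysis on the structure constants $a^{d,k}_{\alpha,i}$ arising from \eqref{main} and \eqref{maini6}.

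\medskip

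If anything, the only point worth a sentence of care is to make explicit why ``no non-trivial $\frac{1}{2}$-derivations'' is the hypothesis of Lemma~\ref{lempr}: a $\frac{1}{2}$-derivation of the form $\alpha\,\mathrm{Id}$ is by definition a \emph{trivial} $\frac{1}{2}$-derivation (multiplication by a ground-field element), so $\Delta(\widetilde{W}(G)) = \langle \mathrm{Id}\rangle$ is exactly the statement that all $\frac{1}{2}$-derivations are trivial. With that observation in place, the theorem follows verbatim from Lemma~\ref{lempr}, and no further computation is needed.
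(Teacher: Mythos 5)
Your proof is correct and is exactly the paper's argument: the theorem is deduced by combining Theorem~\ref{main5} ($\Delta(\widetilde{W}(G)) = \langle \mathrm{Id}\rangle$, i.e.\ all $\frac{1}{2}$-derivations are trivial) with Lemma~\ref{lempr}. Nothing further is needed.
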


\section{Transposed Poisson structures on 
 not-finitely graded   algebra $\widetilde{HW}(G)$
} 

In this section, we consider one-dimensional central extension  $\widetilde{HW}(G)$  of  \textit{generalized Heisenberg-Virasoro algebra}  $HW(G,-1)$. Let $G$ be any nontrivial additive subgroup of $\mathbb{C}$, and $\mathbb{C}[G\times  \mathbb{Z}_{+}]$
the semigroup algebra of $G\times  \mathbb{Z}_{+}$ with basis $\{x^{\alpha,i}=x^{\alpha}t^i \ | \ \alpha \in G, i \in \mathbb{Z}_{+}\}$ and product $x^{\alpha,i} \cdot x^{\beta,j} = x^{\alpha+\beta,i+j}.$
Let $\partial_x,$ $\partial_t$
be the derivations of  $\mathbb{C}[G\times  \mathbb{Z}_{+}]$ defined by 
$\partial_x(x^{\alpha,i}) = \alpha x^{\alpha-1,i},$ $\partial_t(x^{\alpha,i}) = i x^{\alpha,i-1}$
    for $\alpha \in G, i \in \mathbb{Z}_{+}.$ Denote 
 $\partial = \partial_x + \partial_t.$ Then $HW(G,-1)$ the Lie algebra
with  the 
  basis $\{L_{\alpha, i} = x^{\alpha,i} \partial, \    H_{\beta, j} = x^{\beta,j} \ | \ \alpha,\beta \in G, i,j \in \mathbb{Z}_{+}\}$ and the following multiplication table: 
 \begin{longtable}{lcl}
 $[L_{\alpha, i}, L_{\beta, j}] $&$=$&$ (\beta - \alpha)L_{\alpha+\beta, i+j} + (j-i) L_{\alpha+\beta, i+j-1},$ \\[2mm]
 $[L_{\alpha, i}, H_{\beta, j}] $&$=$&$ \beta H_{\alpha+\beta, i+j} + j H_{\alpha+\beta, i+j-1}.$ \\
 \end{longtable}

\begin{theorem}(see, \cite{kkz}) 
Let $\varphi$ be a $\frac{1}{2}$-derivation of the algebra $HW,$ then 
\begin{longtable}{lcl}
$\varphi(L_{\alpha,i})$&$=$&$\sum\limits_{d\in G\setminus\{0\}}\sum\limits_{k\in\mathbb{Z}_{+}}\frac{(-d)^k}{k!}a^{d,0}L_{\alpha+d,k}+a^{0,0}L_{\alpha,i},$\\
$\varphi(H_{\alpha,i})$&$=$&$\sum\limits_{d\in G\setminus\{0\}}\sum\limits_{k\in\mathbb{Z}_{+}}\frac{(-d)^k}{k!}a^{d,0}H_{\alpha+d,k}+a^{0,0}H_{\alpha,i}.$

\end{longtable}

\end{theorem}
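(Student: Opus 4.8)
The statement is quoted from \cite{kkz}, and the argument I would give (essentially the one carried out there) runs as follows. The algebra $HW = HW(G,-1)$ is $G$-graded, with $HW_0 = \langle L_{0,i},\,H_{0,j}\mid i,j\in\mathbb{Z}_{+}\rangle$ and $HW_{\alpha\neq0} = \langle L_{\alpha,i},\,H_{\alpha,j}\mid i,j\in\mathbb{Z}_{+}\rangle$, so by Lemma~\ref{l01} it is enough to determine each homogeneous component $\Delta_d(HW)$. Fixing $\varphi_d\in\Delta_d(HW)$ and using invariance of $\operatorname{Ann}(HW)=\langle H_{0,0}\rangle$, I would write, with finite sums,
$$\varphi_d(L_{\alpha,i}) = \sum_{k}\bigl(a^{d,k}_{\alpha,i}L_{\alpha+d,k} + b^{d,k}_{\alpha,i}H_{\alpha+d,k}\bigr),\qquad \varphi_d(H_{\alpha,i}) = \sum_{k}\bigl(c^{d,k}_{\alpha,i}L_{\alpha+d,k} + e^{d,k}_{\alpha,i}H_{\alpha+d,k}\bigr),$$
and impose the $\frac12$-derivation identity on the three families $[L_{\alpha,i},L_{\beta,j}]$, $[L_{\alpha,i},H_{\beta,j}]$, $[H_{\alpha,i},H_{\beta,j}]=0$. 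Writing $\mathcal H=\langle H_{\gamma,k}\mid\gamma\in G,\ k\in\mathbb{Z}_{+}\rangle$ for the abelian ideal spanned by the $H$'s, and noting that the bracket sends $\mathcal H$ into $\mathcal H$ while $[\mathcal H,\mathcal H]=0$, each of the three identities splits into a part lying in the span of the $L$'s and a part lying in $\mathcal H$; this gives four coupled linear recursions in the coefficients.

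The plan is to kill the off-diagonal coefficients first. The $\mathcal H$-part of the identity on $[L,L]$ is a closed system in the $b^{d,k}_{\alpha,i}$ alone; specializing $\beta=0$, then $\beta=-\alpha$, and using that $\alpha$ ranges over all of $G$ (so the coefficient of each power of $\alpha$ must vanish separately) forces $b\equiv0$. Dually, the $L$-parts of the identities on $[H,H]=0$ and on $[L,H]$ form a closed system in the $c^{d,k}_{\alpha,i}$, which the same kind of specialization kills, so $\varphi_d$ preserves $\mathcal H$ and descends to a $\frac12$-derivation of $HW/\mathcal H\cong W(G)$. With $b=c=0$, the $L$-part of the identity on $[L,L]$ involves only the $a$'s — it is exactly the system treated in \cite{kkz} and recalled in the proof of Theorem~\ref{main4} — while the $\mathcal H$-part of the identity on $[L,H]$ ties the $e$'s to the $a$'s. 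Comparing coefficients after $\beta=0$, then $\beta=-\alpha-d$, and again exploiting arbitrariness of $\alpha$, first eliminates the dependence of $a^{d,k}_{\alpha,i}$ and $e^{d,k}_{\alpha,i}$ on $\alpha$ and on $i$, and then the correction term $(j-i)L_{\alpha+\beta,i+j-1}$ produces the one-step recursion $(k+1)a^{d,k+1}=-d\,a^{d,k}$; its unique solution is $a^{d,k}=\frac{(-d)^k}{k!}\,a^{d,0}$, with $e^{d,k}=a^{d,k}$ and $a^{0,0}$ the only free diagonal parameter. Reassembling $\varphi=\sum_d\varphi_d$ then gives the asserted formula.

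The main obstacle is purely the bookkeeping, exactly as in the $\widehat{W}(G)$ and $\widetilde{W}(G)$ computations of the previous sections: because the bracket is not homogeneous in the second index, every coefficient comparison yields a two-term recursion, so one must combine several specializations of $(\alpha,i,\beta,j)$ before the spurious $\alpha$-dependence cancels, while keeping track of which coefficients have already been pinned down at each stage and checking that finiteness of all the sums is preserved. Since \cite{kkz} performs this computation in full for $HW$, one may alternatively just cite it.
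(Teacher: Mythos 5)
The paper offers no proof of this statement at all --- it is imported verbatim from \cite{kkz} with a citation, which is exactly the fallback you name in your last sentence, so your proposal is at least as complete as what the paper provides. Your sketch (grade by $G$, split each $\frac12$-derivation identity into its $L$-span and $H$-span parts, kill the off-diagonal coefficients, then extract the recursion $(k+1)a^{d,k+1}=-d\,a^{d,k}$ whose solution gives the $\frac{(-d)^k}{k!}$ coefficients) is the same coefficient-comparison method the paper itself carries out for Theorems~\ref{main4}, \ref{main5} and \ref{main8}, so it is essentially the same approach.
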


The algebra $\widetilde{HW}(G),$ that was  obtained in \cite{fzy15},
 has the following multiplication table: 
\begin{eqnarray}
     &&[L_{\alpha, i}, L_{\beta, j}] = (\beta - \alpha)L_{\alpha+\beta, i+j} + (j-i) L_{\alpha+\beta, i+j-1}+\delta_{\alpha,-\beta}\delta_{i,-j}\frac{\alpha^3-\alpha}{12}C_L,\label{ad1}\\
 &&[L_{\alpha, i}, H_{\beta, j}] = \beta H_{\alpha+\beta, i+j} + j H_{\alpha+\beta, i+j-1}+\delta_{\alpha,-\beta}\delta_{i,-j}(\alpha^2-\alpha)C_{LH},\label{cd1}\\
 &&[H_{\alpha, i}, H_{\beta, j}] = \delta_{\alpha,-\beta}\delta_{i,-j}\alpha C_H.\label{bd1}
 \end{eqnarray}

\begin{center}
$\widetilde{HW}(G)=\bigoplus_{\alpha\in G}\widetilde{HW}(G)_{\alpha},$ where
$\widetilde{HW}(G)_{0}=\langle L_{0,i}, C_L, C_{LH},C_H  \rangle_{i\in \mathbb{Z}_{+}}$  and 
$\widetilde{HW}(G)_{\alpha\neq 0}=\langle L_{\alpha,i} \rangle_{i\in \mathbb{Z}_{+}}.$
\end{center}

\begin{theorem} \label{main8}
 $\Delta(\widetilde{HW}(G))=\langle {\rm Id}\rangle.$  
 \end{theorem}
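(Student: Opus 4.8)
The plan is to follow the same two-stage strategy used for $\widehat{W}(G)$, $\widetilde{W}(G)$, and $g(G,\lambda)$: first reduce to the homogeneous components $\Delta_d(\widetilde{HW}(G))$ via Lemma~\ref{l01}, and then, for each $d\in G$, determine a general $\varphi_d\in\Delta_d(\widetilde{HW}(G))$ by applying it to the defining relations \eqref{ad1}--\eqref{bd1} and extracting the resulting scalar recursions on the structure coefficients. Since $\operatorname{Ann}(\widetilde{HW}(G))$ is invariant under any $\tfrac12$-derivation, I may assume
$$\varphi_d(L_{\alpha,i})=\sum_{k\in\mathbb{Z}_{+}}a^{d,k}_{\alpha,i}L_{\alpha+d,k}+\delta_{\alpha+d,0}\big(\text{central terms}\big),\qquad
\varphi_d(H_{\alpha,i})=\sum_{k\in\mathbb{Z}_{+}}b^{d,k}_{\alpha,i}H_{\alpha+d,k}+\delta_{\alpha+d,0}\big(\text{central terms}\big),$$
together with $\varphi_d(C_L),\varphi_d(C_{LH}),\varphi_d(C_H)$ supported only in degree $0$; note that $[\widetilde{HW}(G),\widetilde{HW}(G)]$ being invariant forces $\varphi_d$ to send $L$'s to $L$'s plus center and $H$'s to $H$'s plus center, because $H$'s never occur as brackets of $L$'s.

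Next I would exploit the $L$-part exactly as in Theorem~\ref{main5}: applying $\varphi_d$ to \eqref{ad1} gives the same recursion as in $\widetilde{W}(G)$ (with $i+j-1$ in the subscript), hence $a^{d,k}_{\alpha,i}=a^{d,k-i}_{0,0}$ and, tracking the central equation coming from the $\tfrac{\alpha^3-\alpha}{12}C_L$ cocycle, one concludes $a^{d,k}_{0,0}=0$ for $d\neq0$ and $a^{0,k}_{0,0}=0$ for $k\neq0$, with $a^{0,0}_{0,0}=\lambda$; this kills the $\widehat{W}$-type obstruction because the cocycle here sits at $i+j=0$ rather than producing a one-sided shift. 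Then I would turn to the mixed relation \eqref{cd1}: applying $\varphi_d$ and using the already-determined $a$'s, the homogeneous (non-central) part yields a recursion of the form $2\beta\,b^{d,k}_{\alpha+\beta,i+j}+2j\,b^{d,k}_{\alpha+\beta,i+j-1}=(\text{shifts of }b)+(\text{terms in }a)$, from which, by the usual specializations ($\alpha=0$, then $\beta=0$, then $\beta=-\alpha-d$, etc.), I get $b^{d,k}_{\alpha,i}=b^{d,k-i}_{0,0}$ and then $b^{d,k}_{0,0}=0$ for $d\neq0$, while for $d=0$ the coupling to $\varphi_0(L)$ through the $(\beta H_{\alpha+\beta,i+j}+jH_{\alpha+\beta,i+j-1})$ term pins $b^{0,0}_{0,0}=\lambda$ and $b^{0,k}_{0,0}=0$ for $k\neq0$. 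Finally, the central coefficients are cleaned up using the $\delta_{\alpha,-\beta}\delta_{i,-j}$ parts of all three relations (as in Lemmas~\ref{thm(-2)} and \ref{thm(1)}) together with the $[H,H]$ relation \eqref{bd1}, which forces $\varphi_0(C_H)=\lambda C_H$, $\varphi_0(C_L)=\lambda C_L$, $\varphi_0(C_{LH})=\lambda C_{LH}$ and all off-diagonal central images to vanish; hence $\varphi_d=0$ for $d\neq0$ and $\varphi_0=\lambda\,\mathrm{Id}$.

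The main obstacle I anticipate is the mixed equation \eqref{cd1}: unlike the pure-$L$ computation, it couples the unknowns $b$ to the already-solved $a$, and the central cocycle $(\alpha^2-\alpha)C_{LH}$ has a different shape (degree-$2$ rather than degree-$3$), so the specialization $\beta=-\alpha-d$ produces a polynomial identity in $\alpha$ of degree $2$ whose vanishing must be checked carefully to rule out a spurious nonzero $b^{d,\ast}_{0,0}$ for $d\neq0$ — this is the analogue of the delicate step ``$\alpha(d+\alpha)(d+2\alpha)a^{d,-1}=0$'' in Theorem~\ref{main5}, and one must make sure $G$ is large enough to choose $\alpha$ avoiding the (finitely many) bad roots, which holds since $G$ is a nontrivial subgroup of $\mathbb{C}$. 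Once that is handled, everything else is bookkeeping, and combining with Lemma~\ref{lempr} immediately gives that $\widetilde{HW}(G)$ admits no nontrivial transposed Poisson structures.
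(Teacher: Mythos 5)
Your overall architecture (grade by $G$-degree via Lemma~\ref{l01}, apply $\varphi_d$ to \eqref{ad1}--\eqref{bd1}, solve the resulting recursions) matches the paper, but the two steps on which everything hinges are not justified, and the justifications you do offer are incorrect. First, the claim that invariance of $[\widetilde{HW}(G),\widetilde{HW}(G)]$ forces $\varphi_d(L_{\alpha,i})$ to have no $H$-component (and $\varphi_d(H_{\alpha,i})$ no $L$-component) is false: the derived subalgebra is not separated into an ``$L$-part'' and an ``$H$-part'' --- it contains all $L_{\gamma,i}$ and all $H_{\gamma,i}$ with $\gamma\neq 0$ (e.g.\ $[L_{0,0},L_{\gamma,0}]=\gamma L_{\gamma,0}$ and $[L_{0,0},H_{\gamma,0}]=\gamma H_{\gamma,0}$), so its invariance gives no information about cross-terms. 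The vanishing of the coefficients $b^{d,k}_{\alpha,i}$ and $f^{d,k}_{\alpha,i}$ is a genuine computation; the paper does not redo it but imports it from the description of $\Delta(HW(G,-1))$ in \cite[Theorem 15]{kkz}, quoted just before the theorem.

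Second, your mechanism for eliminating the $\widehat{W}$-type freedom in the $a$'s does not work. The $[L,L]$ relation \eqref{ad1} alone yields $a^{d,k}_{\alpha,i}=a^{d,k-i}_{0,0}$, and its central part, since $\delta_{i,-j}$ with $i,j\in\mathbb{Z}_{+}$ forces $i=j=0$, constrains only the single coefficient $a^{d,0}$; this is exactly the situation of $\widehat{W}(G)$, where the cocycle also sits at $i=j=0$ and an infinite family of nontrivial $\frac12$-derivations survives (Theorem~\ref{main4}). So ``the cocycle sits at $i+j=0$'' kills nothing beyond $a^{d,0}$, contrary to what you assert. What actually rigidifies $\widetilde{HW}(G)$ is the $H$-part: the mixed relation $[L,H]$ ties all higher coefficients to $a^{d,0}$ (giving $a^{d,k}_{\alpha,i}=\tfrac{(-d)^k}{k!}a^{d,0}$, not your formula), and then the central part of the $[H,H]$ relation \eqref{bd1} (equation \eqref{id1} in the paper) forces $a^{d,0}=0$ for $d\neq 0$. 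Your proposal uses \eqref{cd1} only to solve for the $b$'s and never to constrain the $a$'s, and it uses \eqref{bd1} only for the central coefficients, so as written the argument cannot exclude nontrivial derivations of $\widehat{W}$-type on the $L$-part. The remaining bookkeeping on the $\gamma$'s is fine in spirit, but it only becomes meaningful after these two gaps are closed (either by redoing the $HW(G,-1)$ computation or by citing \cite{kkz} as the paper does).
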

\begin{proof}
By Lemma \ref{l01}, we get  $\Delta(\widetilde{HW}(G))=\bigoplus\limits_{\alpha\in G}\Delta_{\alpha}(\widetilde{HW}(G)).$
To determine $\Delta(\widetilde{HW}(G)),$ we need to calculate 
every homogeneous subspace $\Delta_{\alpha}(\widetilde{HW}(G)).$
Let $\varphi_d \in \Delta_d(\widetilde{HW}(G)).$ 
It is known that $\operatorname{Ann}(\widetilde{HW}(G))$ is invariant under any $\frac{1}{2}$-derivation we can assume that 

\begin{longtable}{rcl}
$\varphi_d(L_{\alpha,i})$&$=$&$\sum\limits_{k\in\mathbb{Z}_+}a^{d,k}_{\alpha,i}L_{\alpha+d,k}+\sum\limits_{k\in\mathbb{Z}}b_{\alpha,i}^{d,k}H_{\alpha+d,k}+\delta_{\alpha,-d}\left(\gamma_{\alpha,i}^{1,1}C_L+\gamma_{\alpha,i}^{1,2}C_{LH}+\gamma_{\alpha,i}^{1,3}C_H\right),$\\
$\varphi_d(H_{\alpha,i})$&$=$&$\sum\limits_{k\in\mathbb{Z}}f_{\alpha,i}^{d,k}L_{\alpha+d,k}+\sum\limits_{k\in\mathbb{Z}}c_{\alpha,i}^{d,k}H_{\alpha+d,k}+\delta_{\alpha,-d}\left(\gamma_{\alpha,i}^{2,1}C_L+\gamma_{\alpha,i}^{2,2}C_{LH}+\gamma_{\alpha,i}^{2,3}C_H\right),$\\
$\varphi_d(C_L)$&$=$&$\delta_{d,0}\left(\gamma_L^1C_L+\gamma_L^{2}C_{LH}+\gamma_{L}^{3}C_H\right),$\\
$\varphi_d(C_{LH})$&$=$&$\delta_{d,0}\left(\gamma_{LH}^1C_L+\gamma_{LH}^{2}C_{LH}+\gamma_{LH}^{3}C_H\right),$\\
 $\varphi_d(C_H)$&$=$&$\delta_{d,0}\left(\gamma_H^1C_L+\gamma_H^{2}C_{LH}+\gamma_{H}^{3}C_H\right).$
\end{longtable}

\noindent From  \cite[Theorem 15]{kkz}, we can get 
\begin{center}$b_{\alpha,i}^{d,k}=f_{\alpha,i}^{d,k}=0,$ \ for \ $\alpha\in G, \ i\in \mathbb{Z};$ \quad
$a_{\alpha,i}^{d,k}=c_{\alpha,i}^{d,k}=\frac{(-d)^k}{k!}a_{0,0}^{d,0},$ \  for \  $ d\neq 0.$
\end{center}
We simply denote $a_{0,0}^{d,0}=a^{d,0}.$
To determine other coefficients, we start with applying $\varphi_d$
to both side of (\ref{bd1}) and obtain
{\small \begin{equation}\label{id1}
2\delta_{\alpha,-\beta}\delta_{i,-j}\delta_{d,0}\alpha\left(\gamma_H^1C_L+\gamma_H^{2}C_{LH}+\gamma_{H}^{3}C_H\right)=\sum_{k\in\mathbb{Z}_+}\frac{(-d)^k}{k!}\delta_{\alpha+\beta,-d}a^{d,0}\left(\delta_{j,-k}(\alpha+d)+\delta_{i,-k}\alpha\right)C_{H},  \end{equation}}
for $d\neq0.$
Taking $\beta=-\alpha-d,$ $i=k=0,$ $j\neq0$ in (\ref{id1}), it gives $a^{d,0}=0.$ For $d=0$,
we have $$2\delta_{\alpha,-\beta}\delta_{i,-j}\alpha\left(\gamma_H^1C_L+\gamma_H^{2}C_{LH}+\gamma_{H}^{3}C_H\right)=2a^{0,0}\delta_{\alpha,-\beta}\delta_{i,-j}\alpha C_H.$$
In this case putting $\beta=-\alpha,$ $i=j=0$, we can get $\gamma_H^1=\gamma_H^2=0$ and  $\gamma_H^3=a^{0,0}.$

Next we apply $\varphi_d$ to both side of (\ref{cd1}) and this implies 

{\small \begin{equation}\label{id2}
\begin{array}{ccll}
 \beta\left(\gamma_{\alpha+\beta,i+j}^{2,1}C_L+\gamma_{\alpha+\beta,i+j}^{2,2}C_{LH}+\gamma_{\alpha+\beta,i+j}^{2,3}C_H\right) 
=- j\left(\gamma_{\alpha+\beta,i+j-1}^{2,1}C_L+\gamma_{\alpha+\beta,i+j-1}^{2,2}C_{LH}+\gamma_{\alpha+\beta,i+j-1}^{2,3}C_H\right),
\end{array}
\end{equation}}

\noindent for $\alpha+\beta+d=0$ and   $d\neq 0.$ 

Putting $\alpha=j=0$, $\beta =-d$  in (\ref{id2}), we have 
$\gamma_{-d,i}^{2,1}=\gamma_{-d,i}^{2,2}=\gamma_{-d,i}^{2,3}=0$ for all $i\in\mathbb{Z}_{+}.$

For $d=0$ we have 
{\small \begin{equation}\label{id3}
\begin{array}{rlllcl}
\delta_{\alpha,-\beta} \Big( & \beta\big(\gamma_{\alpha+\beta,i+j}^{2,1}C_L+\gamma_{\alpha+\beta,i+j}^{2,2}C_{LH}+\gamma_{\alpha+\beta,i+j}^{2,3}C_H\big)\\
&+j\big(\gamma_{\alpha+\beta,i+j-1}^{2,1}C_L+\gamma_{\alpha+\beta,i+j-1}^{2,2}C_{LH}+\gamma_{\alpha+\beta,i+j-1}^{2,3}C_H\big)\\
&\multicolumn{1}{r}{+\delta_{i,-j}(\alpha^2-\alpha)\big(\gamma_{LH}^1C_L+\gamma_{LH}^{2}C_{LH}+\gamma_{LH}^{3}C_H\big) \Big)} &=&\delta_{\alpha+\beta,0}\delta_{i+j,0}(\alpha^2-\alpha)a^{0,0}C_{LH}
\end{array}
\end{equation}}
Then
setting $\beta=-\alpha$  in (\ref{id3}) with $i=j=0$, we get the following relation 
$$-\alpha\left(\gamma_{0,0}^{2,1}C_L+\gamma_{0,0}^{2,2}C_{LH}+\gamma_{0,0}^{2,3}C_H\right)
+(\alpha^2-\alpha)\left(\gamma_{LH}^1C_L+\gamma_{LH}^{2}C_{LH}+\gamma_{LH}^{3}C_H\right)=(\alpha^2-\alpha)a^{0,0}C_{LH}.$$
In this relation replacing $\alpha$ with $-\alpha$, then combining them, we deduce 
$$\gamma_{0,0}^{2,1}=\gamma_{0,0}^{2,2}=\gamma_{0,0}^{2,3}=\gamma_{LH}^1=\gamma_{LH}^3=0, \quad \gamma_{LH}^2=a^{0,0}.$$

Finally, applying $\varphi_d$ to both side of (\ref{ad1}), we derive
{\small \begin{equation}\label{id4}
\begin{array}{ccll}
\multicolumn{1}{l}{(\beta-\alpha)\left(\gamma_{\alpha+\beta,i+j}^{1,1}C_L+\gamma_{\alpha+\beta,i+j}^{1,2}C_{LH}+\gamma_{\alpha+\beta,i+j}^{1,3}C_H\right) +}\\
(j-i)\left(\gamma_{\alpha+\beta,i+j-1}^{1,1}C_L+\gamma_{\alpha+\beta,i+j-1}^{1,2}C_{LH}+\gamma_{\alpha+\beta,i+j-1}^{1,3}C_H\right)=0,
\end{array}
\end{equation}}

\noindent for $\alpha+\beta+d=0$ and $d\neq 0.$ Letting $\beta=\alpha,$ $d=-2\alpha,$ $j=1$ in (\ref{id4}), we can get 
$\gamma_{-d,i}^{1,1}=\gamma_{-d,i}^{1,2}=\gamma_{-d,i}^{1,3}=0$ for all $i\in\mathbb{Z}_{+}.$
Similarly to the previous case, we deduce
$$\gamma_{0,0}^{1,1}=\gamma_{0,0}^{1,2}=\gamma_{0,0}^{1,3}=\gamma_{L}^2=\gamma_{L}^3=0, \quad \gamma_{L}^1=a^{0,0}.$$

As a conclusion, we obtain $\varphi_0=\alpha^{0,0}\textrm{Id}.$
\end{proof}

Thanks to Lemma \ref{lempr}, we have the following statement.

\begin{theorem} There are no non-trivial transposed Poisson structures defined on  $\widetilde{HW}(G).$
\end{theorem}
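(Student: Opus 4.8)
The plan is to deduce the statement directly from the $\frac12$-derivation computation carried out above, with no further calculation. By Theorem~\ref{main8} we have $\Delta(\widetilde{HW}(G)) = \langle \mathrm{Id}\rangle$, so $\widetilde{HW}(G)$ admits no non-trivial $\frac12$-derivations. Lemma~\ref{lempr} then applies verbatim: a Lie algebra all of whose $\frac12$-derivations are scalar multiples of the identity carries only trivial transposed Poisson structures. Hence every transposed Poisson structure on $\widetilde{HW}(G)$ is trivial, which is exactly the assertion. The work has effectively already been done in establishing Theorem~\ref{main8}.

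For orientation, let me recall where the content sits, since that is the only non-formal ingredient. One exploits the $G$-grading $\widetilde{HW}(G) = \bigoplus_{\alpha \in G}\widetilde{HW}(G)_\alpha$ and Lemma~\ref{l01} to reduce to homogeneous $\frac12$-derivations $\varphi_d$ of each degree $d$; invariance of $\operatorname{Ann}(\widetilde{HW}(G))$ fixes the shape of $\varphi_d$ on the basis vectors $L_{\alpha,i}$, $H_{\alpha,i}$ and the central elements $C_L, C_{LH}, C_H$. Applying $\varphi_d$ to the three brackets \eqref{ad1}--\eqref{bd1} yields recurrences among the structure coefficients. The $L$--$L$ and $H$--$H$ blocks are governed by the already-known $\frac12$-derivations of $HW$ from \cite{kkz}, forcing $b_{\alpha,i}^{d,k} = f_{\alpha,i}^{d,k} = 0$ and $a_{\alpha,i}^{d,k} = c_{\alpha,i}^{d,k} = \tfrac{(-d)^k}{k!}a^{d,0}$; testing the cocycle identities then forces $a^{d,0}=0$ for $d \neq 0$ and collapses all of the central parameters $\gamma$, leaving only $\varphi_0 = a^{0,0}\,\mathrm{Id}$.

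The only delicate point—and the step I would watch most closely—is the bookkeeping of the central terms: checking that the three independent cocycles attached to $C_L$, $C_{LH}$, $C_H$ really do annihilate every $\gamma^{k,\ell}_{\alpha,i}$ and every $\gamma^{\ell}_{\bullet}$ once the non-central coefficients are known, in particular isolating the degree-zero relations by specializing $\alpha, \beta, i, j$ and using the substitution $\alpha \mapsto -\alpha$ to separate even and odd parts. But this is precisely what is carried out in the proof of Theorem~\ref{main8}; granting that theorem, the present statement follows by the two-line application of Lemma~\ref{lempr} described above.
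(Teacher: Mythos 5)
Your proposal is correct and matches the paper exactly: the theorem is an immediate consequence of Theorem~\ref{main8} combined with Lemma~\ref{lempr}, which is precisely the one-line deduction the paper gives. The additional recap of the proof of Theorem~\ref{main8} is accurate but not needed for this statement.
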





\begin{thebibliography}{22}

\bibitem{aae23}
 Abdurasulov K.,
 Adashev J.,
 Eshmeteva S.,
 Transposed Poisson structures on solvable Lie algebras with filiform nilradical, arXiv:2401.04443.



\bibitem{ackp}
 Arbarello E.,  De Concini  C., Kac V.,   Procesi C., 
Moduli spaces of curves and representation theory,
Communications in Mathematical Physics, 117 (1988),   1, 1--36.
 

\bibitem{bai20}
 Bai C., Bai R., Guo L., Wu Y.,
Transposed Poisson algebras, Novikov-Poisson algebras, and 3-Lie algebras,  Journal of Algebra, 632 (2023), 535--566. 
 

  
 

	 
 \bibitem{b03}
Billig Yu., 
Representations of the twisted Heisenberg-Virasoro algebra at level zero,
Canadian Mathematical Bulletin, 46 (2003), 4, 529--537.


 \bibitem{chs14}
Chen Q., Han J.,   Su Y., 
Lie bialgebras of not-finitely graded Lie algebras related to generalized Virasoro algebras,  
International Journal of Mathematics, 25 (2014),  5, 1450049.

 \bibitem{chs15}
Chen Q.,   Han J.,  Su Y., 
Structures of not-finitely graded Lie algebras related to generalized Virasoro algebras, 
Communications in Algebra, 43 (2015),  7, 3033--3049.

 \bibitem{chsy}
 Chen H.,  Han J., Su Y.,   Yue X.,
Two classes of non-weight modules over the twisted Heisenberg-Virasoro algebra,
Manuscripta Mathematica, 160 (2019),   1/2, 265--284.


 \bibitem{cl14}
Chen H.,  Li J., 
Left-symmetric algebra structures on the twisted Heisenberg-Virasoro algebra, 
Science China Mathematics, 57 (2014),  3, 469--476.


 \bibitem{fzy15}
Fan G.,  Zhou C.,  Yue X., 
Structures of not-finitely graded Lie algebras related to generalized Heisenberg-Virasoro algebras,
Acta Mathematica Sinica, English Series, 31 (2015), 10, 1517--1530.
 
 \bibitem{fer23}
 Fernández Ouaridi A.,
On the simple transposed Poisson algebras and Jordan superalgebras,
 Journal of Algebra, 641 (2024), 173--198.   

 
 
 
\bibitem{FKL}
Ferreira B. L. M., Kaygorodov I., Lopatkin V., $\frac{1}{2}$-derivations of Lie algebras and transposed Poisson algebras, 
Revista de la Real Academia de Ciencias Exactas, Físicas y Naturales. Serie A. Matemáticas, 115 (2021), 3, 142. 
 

  
 	 


  \bibitem{kac}
 Kac V., Raina A., Bombay lectures on highest weight representations of infinite-dimensional Lie algebras, Advanced
Series in Mathematical Physics, 2. World Scientific Publishing Co., Inc., Teaneck, NJ, 1987. xii+145 pp.


  \bibitem{k23}
  Kaygorodov I.,     
Non-associative algebraic structures: classification and structure, 
Communications in Mathematics,  32 (2024), 3, 1--62.
		
 

\bibitem{kk23}
Kaygorodov I., Khrypchenko M., 
Transposed Poisson structures on Witt-type algebras,
 Linear Algebra and its Applications,    665  (2023),  196--210.	


\bibitem{kkg23}
Kaygorodov I., Khrypchenko M., 
Transposed Poisson structures on generalized Witt algebras and Block Lie algebras,    Results in Mathematics, 78 (2023),  5,  186.


 


\bibitem{kkh24}
  Kaygorodov I., Khudoyberdiyev A., 
	Transposed Poisson structures  on solvable and perfect Lie algebras,
 Journal of Physics. A. Mathematical and Theoretical,  57
 (2024), 3, 035205.


\bibitem{kkz}
Kaygorodov I.,  Khudoyberdiyev A., Shermatova Z.,
 Transposed Poisson structures on Witt-type algebras, arXiv:2405.11108


 

\bibitem{klv22}
Kaygorodov I., Lopatkin V., Zhang Z.,
    Transposed Poisson structures on Galilean and solvable Lie algebras,  Journal of Geometry and Physics,  187    (2023), 104781.


\bibitem{lp12}
 Liu D.,  Pei Y.,   Zhu L., 
Lie bialgebra structures on the twisted Heisenberg-Virasoro algebra, 
Journal of Algebra, 359 (2012), 35--48.


\bibitem{lb23}
Liu G.,  Bai C.,
  A bialgebra theory for transposed Poisson algebras via anti-pre-Lie
  bialgebras and anti-pre-Lie-Poisson bialgebras, 
  Communications in Contemporary Mathematics, 2023, DOI: 10.1142/S0219199723500505.

 
 \bibitem{LZ}
Lü R.,   Zhao K., 
Classification of irreducible weight modules over the twisted Heisenberg-Virasoro algebra,
Communications in Contemporary Mathematics, 12 (2010),  2, 183--205.


 \bibitem{LZ20}
   Lü R., Zhao K., 
Generalized oscillator representations of the twisted Heisenberg-Virasoro algebra, 
Algebras and Representation Theory, 23 (2020),  4, 1417--1442.



   \bibitem{kms}
Sartayev B., 
Some generalizations of the variety of transposed Poisson algebras,
Communications in Mathematics, 32 (2024),  2, 55--62.


 \bibitem{sj06} 
   Shen R.,   Jiang C., 
The derivation algebra and automorphism group of the twisted Heisenberg-Virasoro algebra,
Communications in Algebra, 34 (2006),   7, 2547--2558.


 \bibitem{su} 
  Su Y., Zhao K., Second cohomology group of generalized Witt type Lie algebras and certain representations,
  Communications in Algebra,  30 (2002), 7,  3285--3309.


\bibitem{ytk} 
Yang Ya.,  Tang X.,  Khudoyberdiyev A.,	
Transposed Poisson structures on Schrodinger algebra in $(n+1)$-dimensional space-time, arXiv:2303.08180.

	 

\bibitem{x0} Xu C.,  
Deformed higher rank Heisenberg-Virasoro algebras, 
International Journal of Algebra and Computation, 31 (2021), 3, 501--517.  


\bibitem{x21} 
Xu C., 
Verma modules over deformed generalized Heisenberg-Virasoro algebras, 
Journal of Pure and Applied Algebra, 225 (2021),   11,   106723.


    \bibitem{yh21}
Yuan L.,  Hua Q., 
$\frac{1}{2}$-(bi)derivations and transposed Poisson algebra structures on Lie algebras,
 Linear and Multilinear Algebra, 70 (2022),   22, 7672--7701.
 
  
 
		
		\bibitem{zz} Zohrabi A., Zusmanovich P.,  
  A $\delta$-first Whitehead lemma for Jordan algebras, 
  Communications in Mathematics,  33 (2025), 1, 2.
	




\end{thebibliography}
\end{document}